\documentclass[11pt,reqno]{amsart}

\usepackage[T1]{fontenc}
\usepackage{xcolor}
\usepackage{graphicx}
\usepackage{amsmath,amssymb,amsthm,mathtools}
\usepackage{aliascnt}
\usepackage{fullpage}
\usepackage[noadjust]{cite}
\usepackage[colorlinks=true,allcolors=blue,backref=page]{hyperref}
\usepackage[noabbrev,capitalize,nameinlink]{cleveref}

\allowdisplaybreaks[1]
\numberwithin{equation}{section}
\crefname{equation}{}{}

\theoremstyle{plain}
\newtheorem{theorem}{Theorem}[section]

\newaliascnt{proposition}{theorem}
\newtheorem{proposition}[proposition]{Proposition}
\aliascntresetthe{proposition}
\crefname{proposition}{Proposition}{Propositions}

\newaliascnt{lemma}{theorem}
\newtheorem{lemma}[lemma]{Lemma}
\aliascntresetthe{lemma}
\crefname{lemma}{Lemma}{Lemmas}

\newaliascnt{corollary}{theorem}

\aliascntresetthe{corollary}
\crefname{corollary}{Corollary}{Corollaries}

\theoremstyle{definition}
\newaliascnt{definition}{theorem}
\newtheorem{definition}[definition]{Definition}
\aliascntresetthe{definition}
\crefname{definition}{Definition}{Definitions}

\theoremstyle{remark}
\newaliascnt{remark}{theorem}

\aliascntresetthe{remark}
\crefname{remark}{Remark}{Remarks}

\renewcommand{\le}{\leqslant}
\renewcommand{\ge}{\geqslant}
\renewcommand{\leq}{\leqslant}
\renewcommand{\geq}{\geqslant}

\DeclarePairedDelimiter{\norm}{\lVert}{\rVert}
\DeclarePairedDelimiter{\abs}{\lvert}{\rvert}

\newcommand{\R}{\mathbb{R}}
\newcommand{\E}{\mathbb{E}}
\newcommand{\inner}[2]{\left\langle #1,#2\right\rangle}
\newcommand{\HS}{\mathrm{HS}}
\newcommand{\Id}{\mathrm{Id}}
\newcommand{\dd}{\,\mathrm{d}}

\newcommand{\mc}{\mathcal}

\DeclareMathOperator{\Var}{Var}
\DeclareMathOperator{\Cov}{Cov}
\DeclareMathOperator{\Tr}{Tr}

\title{The KLS constant is $O(\log^{1/4} n)$}
\author[Letwin]{Brayden Letwin}
\address{Department of Mathematics, University of Washington, Seattle, Washington 98195}
\email{letwin@uw.edu}

\begin{document}

\begin{abstract}
We confirm the Kannan--Lov\'asz--Simonovits conjecture for quadratic forms: if $X \sim \mu$ is an isotropic log-concave random vector in $\R^n$ then for any symmetric matrix $M$ one has that
\[
  \Var_{X \sim \mu}(\inner{MX}{X}) \le 2 \cdot \E_{X \sim \mu} \abs{\nabla \inner{MX}{X}}^2.
\]
As an application, we apply the above to $M = \E_{X \sim \mu}(\inner{X}{\theta} X \otimes X)$ for $\theta \in S^{n-1}$ and show that
the Kannan--Lov\'asz--Simonovits constant $\psi_n$ satisfies that
\[
\psi_n\le C\cdot\log^{1/4} n
\]
for some absolute constant $C > 0$.
\end{abstract}

\maketitle

\section{Introduction}\label{sec:1}
The Kannan--Lov\'asz--Simonovits (KLS) conjecture, introduced by Kannan,
Lov\'asz, and Simonovits \cite{KLS95}, is one of the
central open problems in asymptotic convex geometry.
In one of its equivalent (up to absolute constants) formulations,
it asserts that every isotropic log-concave probability measure $\mu$ on $\R^n$ satisfies a Poincar\'e inequality
\[
  \Var_{X \sim \mu}(f(X)) \le C \cdot \E_{X \sim \mu} \abs{\nabla f(X)}^2,
\]
for every locally Lipschitz function $f : \R^n \to \R$, where $C > 0$ is an absolute constant. Here $X \sim \mu$ denotes a random vector $X$
with law equal to $\mu$. For general background in the theory of asymptotic convex geometry we refer to \cite{AGM15,AGM21,BGVV14}, and for treatise
specifically centered around the KLS conjecture, we refer to \cite{AB15,LV19,KL25b}.

Besides its intrinsic interest, the conjecture is a cornerstone of high-dimensional probability, with
far-reaching applications and connections to fundamental problems in
high-dimensional geometry, information theory, statistics,
and theoretical computer science.
The conjecture grew out of the "localization method", which has grown to be modernly fruitful in computer science and its application to
random walks on convex bodies \cite{LS93,
KLS95,KLS97}; its algorithmic
role, and role in sampling log-concave distributions is developed further in
\cite{Chewi2026,CE25}. A transport approach to understanding KLS, based on
the F\"ollmer process, was also developed in
\cite{MS24}.

Recall that a probability density $\rho : \R^n \to [0, \infty)$ is log-concave if its support $\{x \in \R^n : \rho(x) > 0\}$ is a convex set and $\log \rho$ is concave on the support of $\rho$. 
Given a probability measure $\mu$ on $\R^n$, we say that $\mu$ is log-concave if, for any compact subsets $A$ and $B$ of $\R^n$ and $0 \le \lambda \le 1$ one has that
\[
  \mu(\lambda A + (1-\lambda)B) \ge \mu(A)^{\lambda}\mu(B)^{1-\lambda}.
\]
By a theorem of Borell~\cite{Borell1975}, a probability measure
$\mu$ on $\R^n$ is log-concave if and only if, upon writing
$H=\operatorname{aff}(\operatorname{supp}\mu)$, it has a log-concave density
$\rho:H\to[0,\infty)$ with respect to Lebesgue measure on $H$.
The uniform measure on a convex body, as well as a standard Gaussian measure are both canonical examples of a log-concave measure. If $X \sim \mu$ is a random vector in $\R^n$, we say that $X$ is log-concave if its law
is log-concave. Furthermore, we say that $X$ (or $\mu$ if $X \sim \mu$) is isotropic if it has zero mean and identity covariance:
\[
  \E_{X \sim \mu}(X) = 0, \quad \text{and} \quad \Cov_{X \sim \mu}(X) = \Id.
\]
The \textit{Poincar\'e constant} of a random vector $X$ with law $\mu$ in $\R^n$,
denoted by $C_P(\mu)$, is the infimum over all constants $C \geq 0$
such that, for every locally Lipschitz function
$f:\R^n\to\R$ satisfying
\[
    \E_{X \sim \mu}\abs{\nabla f(X)}^2\leq 1,
\]
one has that
\begin{equation}\label{eq:1.1}
\Var_{X \sim \mu}\bigl(f(X)\bigr)
    \leq
    C \cdot \E_{X \sim \mu}\abs{\nabla f(X)}^2.
\end{equation}
Let $\mu$ be a log-concave probability measure on $\R^n$ with density
$\rho$. Its \textit{isoperimetric constant} $\psi_\mu$ is given by
\[
  \frac{1}{\psi_\mu}
  =
  \inf_A
  \frac{\int_{\partial A}\rho(x)\dd\mathcal{H}^{n-1}(x)}
       {\min\{\mu(A),1-\mu(A)\}},
\]
where the infimum runs over all open sets $A\subseteq\R^n$ with smooth
boundary and $0<\mu(A)<1$. The \textit{Kannan--Lov\'asz--Simonovits (KLS)} constant is defined by
\[
  \psi_n
  =
  \sup_\mu\psi_\mu,
\]
where the supremum runs over all isotropic, log-concave probability measures $\mu$ on $\R^n$. First, observe that by testing \cref{eq:1.1} against linear functions $\inner{\cdot}{\theta}$ for $\theta \in S^{n-1}$ we see that $C_P(\mu) \ge 1$. Importantly, the isoperimetric constant and Poincar\'e constant
are comparable up to absolute constants. Cheeger's inequality
\cite{Cheeger1970} gives one direction, while the reverse
inequalities of Buser \cite{Buser1982} and
Ledoux \cite{Ledoux2004} give the other. De Ponti and Mondino
\cite{DM21} improved the upper bound for this comparison, so that one ultimately has
\[
  \frac{1}{4}
  \leq
  \frac{\psi_\mu^2}{C_P(\mu)}
  \leq
  \pi.
\]
Such comparisons were also shown by Milman under a broader equivalence of important parameters assuming log-concavity: Poincare inequalities, isoperimetric constants, exponential concentration, and uniform tail
bounds for Lipschitz functions control one another up to universal constants, we refer to
\cite{Milman2009} for more details.

As discussed above, the KLS conjecture asserts that every isotropic
log-concave probability measure $\mu$ on $\R^n$ satisfies
$C_P(\mu)\leq C$, where $C>0$ is an absolute constant independent of
$n$. Equivalently, $\sup_{n \ge 2}\psi_n\leq C$.
The KLS conjecture is known for $\ell_p^n$-balls and for broad subclasses of generalized Orlicz balls; see Kolesnikov and Milman \cite{KM14,KM18}.

The conjecture is closely connected with thin-shell
concentration
\cite{Bobkov2007,Paouris2006,Fleury2010,GM11,
KL25c}, Gaussian behavior of linear marginals of convex bodies
\cite{ABP03,BK03,Klartag2007CLT,
Klartag2007PowerLaw,EK08,LM26}, and was known to imply the now solved \textit{slicing problem} of Bourgain:
\cite{Bourgain1986,Ball1988,EK11,KL25a,Bizeul2025}.
Links between KLS and a generalized central-limit conjecture were developed up to polylogarithmic factors in $n$
in \cite{JLV20}. Further quantitative central limit theorems
for sums of high-dimensional log-concave random vectors appear in
\cite{EMZ20,FK24}.

With a series of breakthroughs initiated by Eldan's method of stochastic-localization \cite{Eldan2013}, Lee and Vempala
\cite{LV24} proved the bound
$\psi_n \le C \cdot n^{1/4}$. Chen
\cite{Chen2021} subsequently proved
that
$\psi_n \le \exp(C\sqrt{\log n\cdot\log\log n})$.
Klartag and Lehec \cite{KL22} then gave the first
polylogarithmic bound $\psi_n \le C\cdot\log^5 n$, which was improved to
$\psi_n \le C\cdot\log^{3.2226}n$ by Jambulapati, Lee, and Vempala
\cite{JLV22}. Afterwards, Klartag
\cite{Klartag2023Logarithmic} proved the bound
$\psi_n \le C\cdot\sqrt{\log n}$, which was the previously best known bound. The
purpose of this paper is to give a further improvement beyond this bound.

\begin{theorem} \label{thm:1.1}
Let $n \ge 2$. Then
\[
\psi_n \le C \cdot \log^{1/4} n.
\]
\end{theorem}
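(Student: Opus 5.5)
We run Eldan's stochastic localization and reduce the KLS bound to controlling the operator norm of the covariance process, as in the approach of Klartag \cite{Klartag2023Logarithmic}; the new input is the quadratic-form Poincar\'e inequality announced in the abstract. Fix an isotropic log-concave $\mu$ with density $\rho$. Let $\mu_t$ have density proportional to $\rho(x)e^{\inner{\theta_t}{x}-t|x|^2/2}$, where $d\theta_t = dW_t + a_t\,dt$ and $a_t$ is the barycenter of $\mu_t$. Set $A_t=\Cov(\mu_t)$, so that
\[
dA_t=\int (x-a_t)\otimes(x-a_t)\inner{x-a_t}{dW_t}\dd\mu_t(x) - A_t^2\,dt .
\]
By the Eldan--Lee--Vempala reduction, together with Klartag's improved lemma relating $\psi_\mu$ to the localization, it suffices to show the following. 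With $T \approx c/\log^{1/2}n$, one has $\|A_t\|_{op}\le 2$ for all $t\le T$ with probability at least $1/2$. Given this, for $t\ge T$ the measures $\mu_t$ are $t$-uniformly log-concave, and the variance bound propagated back to time $0$ gives $C_P(\mu)\lesssim 1/T \approx \log^{1/2} n$. Hence $\psi_\mu\lesssim \log^{1/4}n$ by the Cheeger/Buser comparison $\psi_\mu^2\asymp C_P(\mu)$ recalled above.

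The key step is to bound the drift of a potential such as $\Tr(A_t^{q})$ with $q\approx\log n$. Itô's formula produces the third-moment tensor. In direction $\theta$ the relevant quantity is $M_\theta=\E_{\mu_t}\bigl(\inner{X-a_t}{\theta}(X-a_t)\otimes(X-a_t)\bigr)$, and we need the bound $\|M_\theta\|_{HS}^2\lesssim \|A_t\|_{op}^{2}$ without losing a logarithmic factor. To obtain it, first normalize $\mu_t$ to be isotropic, writing $Y=A_t^{-1/2}(X-a_t)$. Then note that
\[
\|M\|_{HS}^2=\E\bigl(\inner{\theta}{Y}\inner{MY}{Y}\bigr)\le \Var\bigl(\inner{MY}{Y}\bigr)^{1/2}.
\]
Applying the quadratic-form inequality $\Var\inner{MY}{Y}\le 2\E|\nabla\inner{MY}{Y}|^2=8\E|MY|^2=8\|M\|_{HS}^2$ gives $\|M\|_{HS}\le \sqrt8$, uniformly in the dimension. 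This replaces the $\log n$-lossy bound $\|M\|_{HS}^2\lesssim\psi_n^2$ in earlier work by an absolute constant. In the non-isotropic setting one rescales, paying factors of $\|A_t\|_{op}^{3/2}$, which are harmless while $\|A_t\|_{op}\le2$.

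With this bound, the Itô drift of $\Tr A_t^q$ is at most $Cq^2\|A_t\|_{op}\Tr A_t^q$. Standard stopping-time and martingale arguments then keep $\|A_t\|_{op}\le 2$ up to a time $T\sim 1/q$, possibly improved to $1/\sqrt{q}$. To reach exponent $1/4$ we need $T\sim\log^{-1/2}n$. Therefore, in place of the crude trace-moment bound, we would follow Klartag's refined argument, which controls $\int_0^T \|A_t\|_{op}\,dt$ via the spectral-projection decomposition. In that decomposition the constant $\psi_n^2$ enters the time scale, and the quadratic-form bound removes one $\psi$ factor: the previous recursion $\psi_n^2\lesssim \log n$ becomes $\psi_n^2\lesssim\sqrt{\log n}$. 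Equivalently, the bootstrap inequality $\psi_n^4\lesssim\log n$ closes.

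The main obstacle is the quadratic-form Poincar\'e inequality with constant $2$ itself. To prove it, we would again use stochastic localization, or a $\Gamma_2$/Bochner computation, on $f(x)=\inner{Mx}{x}$. Decompose $f$ into its Hermite-like chaos components $f=\E f+\text{(degree-1 part)}+\text{(degree-2 part)}$ along the localization martingale. Compute $d\,\E_{\mu_t}f$ exactly using the martingale representation. This expresses $\Var f$ as $\E\int_0^\infty |\nabla_\theta \E_{\mu_t} f|^2dt$, where $\nabla_\theta\E_{\mu_t}f=\E_{\mu_t}[(x-a_t)f]$. Because $f$ is quadratic, $\nabla^2f=2M$ is constant, and log-concavity (Brascamp--Lieb for $\mu_t$) bounds this quantity by $\inner{A_t\,\E\nabla f}{\E\nabla f}$ plus a term $\Tr((A_tMA_tM))$. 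Integrating with $\E A_t\preceq \Id$ yields the constant $2$. Making the Brascamp--Lieb step rigorous for the quadratic term is where I expect the difficulty.
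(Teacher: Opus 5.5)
Your derivation of $\norm{M}_{\HS}\le\sqrt8$ from the quadratic-form inequality is exactly the paper's step $\kappa_n\le2\sqrt2$. The gap is in your reduction from $\kappa_n$ to $\psi_n$.

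You use the classical bound $C_P(\mu_T)\le 1/T$ for the $T$-uniformly log-concave measure $\mu_T$, so you need $\norm{A_t}_{\mathrm{op}}\le2$ up to $T\sim\log^{-1/2}n$. Nothing in the proposal gives this. With $\kappa_n=O(1)$, your own $\Tr A_t^q$ drift estimate with $q\sim\log n$ only reaches $T\sim1/(\kappa_n^2\log n)$. The phrases ``possibly $1/\sqrt q$'' and ``spectral-projection decomposition'' are not arguments. Combined with $C_P\lesssim1/T$, this gives only $C_P(\mu)\lesssim\kappa_n^2\log n$, which is the old bound $\psi_n\lesssim\sqrt{\log n}$.

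The missing ingredient is Klartag's improved Lichnerowicz inequality $C_P(\mu_T)\le\sqrt{\norm{\Cov(\mu_T)}_{\mathrm{op}}/T}$. At $T\sim1/(\kappa_n^2\log n)$ it yields $C_P(\mu)\lesssim\kappa_n\sqrt{\log n}$, which is precisely the estimate the paper quotes. With $\kappa_n\le2\sqrt2$ it gives $\psi_n^2\lesssim\sqrt{\log n}$ directly, with no bootstrap. Your closing arithmetic is right, but the square root comes from this Lichnerowicz step, not from a longer localization time.

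Separately, your sketch of the quadratic-form inequality itself (\cref{thm:1.2}, which the paper's proof of \cref{thm:1.1} takes as input) would not work. You assert
\[
\abs{\E_{\mu_t}[(x-a_t)f]}^2\le\inner{A_t\E_{\mu_t}\nabla f}{\E_{\mu_t}\nabla f}+\Tr(A_tMA_tM).
\]
This is false already at $t=0$: for the centered one-dimensional exponential and $f(x)=mx^2$, the left side is $m^2(\E X^3)^2=4m^2$ and the right side is $m^2$. If you allow a constant, the bound at $t=0$ is equivalent to $\kappa_n=O(1)$, so the argument is circular. Brascamp--Lieb for $\mu_t$ cannot supply it: it is weighted by $(\nabla^2V+t\Id)^{-1}$, which is unrelated to $A_t$ and nearly vacuous for small $t$ (e.g.\ for uniform measures on convex bodies). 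In addition, the first term, with $A_t$ in place of $A_t^2$, is not integrable as $t\to\infty$.

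The paper instead works with the moment map $\varphi$ of $\mu$ and the measure $\nu=e^{-\varphi}\dd y$. There $\E_\nu\nabla^2\varphi=\Id$, and $\nabla^2\varphi\circ(\nabla\varphi)^{-1}$ is a Stein kernel for $\mu$. Differentiating the Monge--Amp\`ere equation twice and applying Brascamp--Lieb to $\nu$, whose potential has Hessian exactly $\nabla^2\varphi$, gives $\E_\nu\Tr(B\nabla^2\varphi B\nabla^2\varphi)\le2\Tr B^2$. Taking $B=\abs{M}$ bounds the Stein kernel of $\abs{M}^{1/2}X$. The Barthe--Klartag $H^{-1}$ inequality then converts this into $\Var\inner{MX}{X}\le8\Tr M^2$.
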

The bound in \cref{thm:1.1} follows immediately from the following Poincar\'e inequality for quadratic forms \cref{thm:1.2}. This is clear to experts, but we give a brief proof before focusing on \cref{thm:1.2} for the remainder of the paper.

\begin{theorem} \label{thm:1.2}
Suppose that $X \sim \mu$ is an isotropic log-concave random vector in $\R^n$. Then for any symmetric matrix $M$ one has that
\begin{equation} \label{eq:1.2}
\Var_{X \sim \mu}(\inner{MX}{X}) \le 2 \cdot \E_{X \sim \mu} \abs{\nabla \inner{MX}{X}}^2.
\end{equation}
\end{theorem}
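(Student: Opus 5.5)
We want $\Var(\langle MX,X\rangle)\le 2\E|\nabla\langle MX,X\rangle|^2 = 8\E|MX|^2$. The right-hand side is exactly computable: with $\nabla\inner{MX}{X}=2MX$, isotropy gives $\E|\nabla \inner{MX}{X}|^2 = 4\E\inner{M^2X}{X} = 4\Tr(M^2)=4\norm{M}_{\HS}^2$. So \cref{eq:1.2} is equivalent to the second-moment bound
\[
\Var(\inner{MX}{X}) \le 8\norm{M}_{\HS}^2 .
\]
I would prove this by a Brascamp--Lieb/$L^2$ (Hörmander) argument adapted to quadratic test functions, with a reduction to smooth, strictly log-concave densities.

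\emph{Step 1: reduction.} By approximation (convolving $\mu$ with a small Gaussian and truncating), I may assume $\mu$ has density $e^{-V}$ with $V$ smooth and strictly convex. Isotropy is then only approximate, so I keep track of $\Cov(X)=\Sigma$ and aim for $\Var(\inner{MX}{X}) \le 8\Tr((M\Sigma)^2)$-type bounds that pass to the limit.

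\emph{Step 2: a dual representation.} Set $f=\inner{MX}{X}-\E\inner{MX}{X}$ and solve $Lu=f$ with $L=\Delta-\nabla V\cdot\nabla$. Then $\Var f=\E|\nabla u|^2$, and Bochner's formula gives
\[
\E(Lu)^2=\E\norm{\nabla^2u}_{\HS}^2+\E\inner{\nabla^2V\nabla u}{\nabla u}.
\]
The task is to control this through the $2$-homogeneity of $f$. The key identity I would exploit is that $\E \nabla f = 2M\E X=0$ and that $\nabla f$ is linear, so $f$ lies in the span of functions whose gradients are centered linear maps. I then test the Brascamp--Lieb inequality $\Var f\le \E\inner{(\nabla^2V)^{-1}\nabla f}{\nabla f}$ against the linear-gradient structure, using integration by parts $\E[X\otimes\nabla V]=\Id$ to trade $(\nabla^2 V)^{-1}$ for covariance information.

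\emph{Step 3: the main obstacle.} Brascamp--Lieb alone is useless when $\nabla^2V$ degenerates, e.g. for uniform measures on convex bodies. I would therefore instead run Eldan's stochastic localization $\mu_t$ with $\mu_0=\mu$ and decompose
\[
\Var_\mu(f)=\E\Var_{\mu_t}(f)+\Var\bigl(\E_{\mu_t}f\bigr).
\]
For large $t$, $\mu_t$ is $t$-uniformly log-concave, so Brascamp--Lieb gives $\Var_{\mu_t}f\le \frac{4}{t}\E_{\mu_t}|MX|^2$. The martingale term has differential $d\E_{\mu_t}f=\inner{\E_{\mu_t}[f\,(X-a_t)]}{dW_t}$, and for quadratic $f$ this third-moment quantity is $\E_{\mu_t}[(X-a_t)\inner{M(X-a_t)}{X-a_t}]+2A_tMa_t$. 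The crux is bounding this uniformly by $\norm{M}_{\HS}$ times $\norm{A_t}_{op}$-type quantities without losing logarithms.

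This step is where I expect the whole difficulty to sit. A dimension-free constant $2$ suggests that an exact identity must be found, such as a monotone quantity in $t$, rather than using crude bounds. My first attempt would be to show that $t\mapsto \E\Tr((MA_t)^2)$ controls the martingale term, and to integrate using $\E A_t\preceq \Id$. I am genuinely unsure that this closes with constant $2$.
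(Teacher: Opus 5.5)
Your reduction to $\Var(\inner{MX}{X})\le 8\norm{M}_{\HS}^2$ is correct, but what follows is not a proof. You abandon Step~2 and leave Step~3 open, and as set up Step~3 cannot give a dimension-free constant. The quadratic variation of $\E_{\mu_t}f$ is governed by $\E_{\mu_t}[(X-a_t)\inner{M(X-a_t)}{X-a_t}]$, a third-moment tensor of $\mu_t$ contracted with $M$. A bound for it of the form $\norm{M}_{\HS}$ times covariance factors is exactly a $\kappa_n$-type statement. You give no way to obtain it, and the paper in fact derives such a bound as a \emph{consequence} of the theorem, via $\abs{\E[\inner{X}{\theta}(\inner{MX}{X}-\Tr M)]}\le\Var(\inner{MX}{X})^{1/2}$. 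So the step is circular. A bootstrap would additionally need $\norm{A_t}_{\mathrm{op}}=O(1)$ up to times of order one, which is itself of KLS strength. If you stop earlier at $t_0$, the term $\frac{4}{t_0}\E_{\mu_{t_0}}\abs{MX}^2$ already loses the factor $1/t_0$. There is also a slip in Step~2: if $Lu=f$, then $\Var f=\E(Lu)^2$, whereas $\E\abs{\nabla u}^2=-\E[uf]$ is the squared $H^{-1}(\mu)$ norm of $f$, not its variance.

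The missing idea is to move Brascamp--Lieb off $\mu$, where it degenerates as you note, onto the moment measure. Let $\varphi$ be the Cordero-Erausquin--Klartag moment map, so that $\nu=e^{-\varphi(y)}\dd y$ satisfies $(\nabla\varphi)_{\#}\nu=\mu$. The potential $\varphi$ is strictly convex even when $\mu$ is uniform on a convex body, isotropy gives $\E_\nu\nabla^2\varphi=\Id$, and by Fathi $\nabla^2\varphi\circ(\nabla\varphi)^{-1}$ is a symmetric Stein kernel for $\mu$.

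Differentiating $\log\det\nabla^2\varphi=-\varphi+V(\nabla\varphi)$ twice gives $\mc{L}(\nabla^2\varphi)+\nabla^2\varphi=\nabla^2\varphi\,\nabla^2V(\nabla\varphi)\,\nabla^2\varphi+G$. Here $\mc{L}=\varphi^{ij}\partial_{ij}-(V_i\circ\nabla\varphi)\partial_i$ is the generator symmetric in $L^2(\nu)$, and $G\succeq0$ is a Gram matrix of third derivatives. For $B\succeq0$ set $T=\Tr(B\nabla^2\varphi B\nabla^2\varphi)$. Integrating $\mc{L}T$ against $\nu$ and using the pointwise bound $\frac12\sum_{i,j,k}(b_i-b_j)^2(\partial_{ijk}\varphi)^2\ge0$ (in coordinates with $\nabla^2\varphi(y)=\Id$ and $B$ diagonal) yields $\E_\nu T\ge 2\E_\nu[\varphi^{ab}\Tr(B\,\partial_a\nabla^2\varphi\,B\,\partial_b\nabla^2\varphi)]$. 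Brascamp--Lieb for $\nu$, applied to the entries of $B^{1/2}\nabla^2\varphi B^{1/2}$ (whose mean is $B$), bounds $\E_\nu T-\Tr(B^2)$ by the same expectation. Together these give $\E_\nu T\le 2\Tr(B^2)$.

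The conclusion then uses two further ingredients:
\begin{itemize}
\item the Barthe--Klartag inequality $\Var f\le\sum_i\norm{\partial_i f}_{H^{-1}}^2$ for log-concave laws with $\E\nabla f=0$ (your observation $\E\nabla f=0$ is exactly its hypothesis);
\item the bound $\norm{\inner{\cdot}{v}}_{H^{-1}}^2\le\E\abs{\tau v}^2$ for a symmetric Stein kernel $\tau$.
\end{itemize}
Apply these to $Z=\abs{M}^{1/2}X$ and $F(z)=\inner{\operatorname{sgn}(M)z}{z}-\Tr M$, with Stein kernel $\abs{M}^{1/2}\nabla^2\varphi\abs{M}^{1/2}$. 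This gives $\Var(\inner{MX}{X})\le 4\E_\nu\Tr(\abs{M}\nabla^2\varphi\abs{M}\nabla^2\varphi)\le 8\Tr(M^2)$. The change of variables is what makes the kernel's Hilbert--Schmidt norm equal $T$ with $B=\abs{M}$. Applying the $H^{-1}$ bound directly to $\inner{MX}{X}$ would instead produce $\E\norm{\tau_\mu M}_{\HS}^2$, which is not controlled, since $M$ and $\tau_\mu$ need not commute.
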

In the terminology of Bobkov, Chistyakov, and G\"otze
\cite{BCG20}, \cref{thm:1.2} gives their second-order
correlation condition with constant $8$. Consequently, for centrally
symmetric isotropic log-concave laws their theorem gives an $O(\log n/n)$ bound for
the average Kolmogorov distance between one-dimensional marginals and the
standard Gaussian law.
The constant $2$ appearing in \cref{eq:1.2} is sharp. A simple exercise shows that it is achieved by an exponential random vector in isotropic position and with $M = \Id$. 
That is, let $X = (X_1, \ldots, X_n)$ where each $X_i$ are independent coordinates with $X_i = Y_i - 1$ and $Y_i$ is an exponential random variable with parameter one. Then the constant $2$ above is attained exactly.
We will now prove \cref{thm:1.1} provided that \cref{thm:1.2} holds.
\begin{proof}[Proof of \cref{thm:1.1}]
  Following Eldan~\cite[Equation~6]{Eldan2013}, and using the notation of
  Klartag and Lehec~\cite[Equation~23]{KL22}, define
  the parameter $\kappa_n$ by
  \[
    \kappa_n = \sup_{\mu} \sup_{\theta \in S^{n-1}} \norm{\E_{X \sim \mu}(\inner{X}{\theta} X \otimes X)}_{\HS},
  \]
  where the supremum is taken over all isotropic log-concave random vectors in $\R^n$ with law $\mu$. What we will do is apply \cref{thm:1.2} to $M = \E_{X \sim \mu}(\inner{X}{\theta} X \otimes X)$ for some fixed isotropic log-concave random vector $X$ in $\R^n$ and $\theta \in S^{n-1}$. This matrix $M$ is clearly symmetric, and its definition gives
  \[
    \begin{aligned}
      \norm{M}_{\HS}^2
      &= \Tr(M^2)
       = \E_{X\sim\mu}\!\left[
           \inner{X}{\theta}\inner{MX}{X}
         \right] \\
      &= \E_{X\sim\mu}\!\left[
           \inner{X}{\theta}
           \bigl(\inner{MX}{X}-\Tr M\bigr)
         \right],
    \end{aligned}
  \]
  where we used $\E_{X\sim\mu}\inner{X}{\theta}=0$ and
  $\E_{X\sim\mu}\inner{MX}{X}=\Tr M$. Therefore, Cauchy--Schwarz gives
  \[
    \norm{M}_{\HS}^4
    \leq
    \Var_{X\sim\mu}\inner{X}{\theta}
    \cdot
    \Var_{X\sim\mu}\bigl(\inner{MX}{X}\bigr).
  \]
  Since $\Var_{X \sim \mu} \inner{X}{\theta} = 1$ as $\Cov_{X \sim \mu}(X) = \Id$, we may now apply \cref{thm:1.2} to conclude using $\nabla\inner{MX}{X} = 2MX$ that
  \[
    \norm{M}_{\HS}^4 \le 8 \norm{M}_{\HS}^2,
  \]
  and consequently we see that $\kappa_n \le 2\sqrt{2}$. Klartag's
  improved Lichnerowicz inequality
  \cite[Theorem~1.3, Corollary~3.2, and the discussion following
  Equation~3.13]{Klartag2023Logarithmic} implies for
  any isotropic log-concave measure $\mu$ we have
  \[
  C_P(\mu) \le C \cdot \kappa_n \sqrt{\log n}.
  \]
  Formally speaking, this bound is not the final bound that Klartag uses, but tracing
  his inequalities shows that such a bound appears from his result. Then using $\kappa_n \le 2 \sqrt{2}$ along with
  $\psi_n^2 \le C \cdot \sup_{\mu} C_P(\mu)$ lets us conclude
  \cref{thm:1.1}.
\end{proof}

\subsection{Notation}

Before beginning the proof of \cref{thm:1.2}, we first state the
notation that will be used throughout. $C, c, C_1, C_2, \ldots > 0,$ denote 
absolute constants which may change between lines. For $x,y\in\R^n$, we write $\inner{x}{y}$ for the standard
Euclidean inner product and $\abs{x}= \sqrt{\inner{x}{x}}$ for the
corresponding Euclidean norm. We write $(e_1,\ldots,e_n)$ for the
standard basis of $\R^n$ and $S^{n-1}$ for the Euclidean unit sphere.
The tensor product $x\otimes y$ is the rank-one matrix determined by
\[
  (x\otimes y)_{ij}=x_i y_j.
\]
Expectations of random vectors and matrices are always taken entrywise.

We use standard probabilistic notation. The expression $X\sim\mu$
means that the random vector $X$ has law $\mu$. As stated above, $X\sim\mu$ is isotropic precisely when
$\E_{X\sim\mu}X=0$ and $\Cov_{X\sim\mu}(X)=\Id$.

We write $A^T$ for the transpose of a matrix $A$,
$\Tr(A)$ for its trace, and $\Id$ for the identity matrix. The notation
$A\succeq0$ means that $A$ is symmetric and
$\inner{Ax}{x}\geq0$ for every $x\in\R^n$. When $A\succeq0$, the
notation $A^{1/2}$ always refers to its unique positive semidefinite
square root. The
Hilbert--Schmidt inner product and its associated norm are
\[
  \inner{A}{B}_{\HS}=\Tr(A^TB),
  \qquad
  \norm{A}_{\HS}^2=\Tr(A^TA).
\]
In particular, if $A$ is symmetric, then
$\norm{A}_{\HS}^2=\Tr(A^2)$. Since the matrices appearing below do not
generally commute, the order of the factors in every matrix product is
the order in which they are written. We will use cyclicity of the trace,
\[
  \Tr(A_1A_2\cdots A_k)=\Tr(A_2\cdots A_kA_1).
\]

For a smooth function $f:\R^n\to\R$, we write $\partial_i f$ for its
$i$th partial derivative, $\nabla f$ for its gradient, and
$\nabla^2f=(\partial_{ij}f)_{i,j=1}^n$ for its Hessian. We also write
$\partial_{ijk}f$ for a third derivative. When $\nabla^2 f$ is invertible, we write
$(f^{ij})=(\nabla^2 f)^{-1}$. It will be quite convenient to adopt Einstein summation notation
throughout the proof, so that in particular, we have
\[
  f^{ia}\partial_{aj}f=\delta_i^j.
\]
Sometimes though, for clarity we will simply write in regular summation notation. The notation will be
clear at each stage.
We will also repeatedly transport measures through maps. If
$T:\R^n\to\R^n$ is measurable, then $T_{\#}\mu$ denotes the pushforward
of $\mu$ under $T$, defined by
\[
  (T_{\#}\mu)(E)=\mu(T^{-1}(E))
\]
for every Borel set $E\subseteq\R^n$. Equivalently, if $X\sim\mu$, then
$T(X)\sim T_{\#}\mu$.

For a symmetric matrix $M$, we write
\[
  \abs{M}=(M^2)^{1/2}.
\]
This is the positive semidefinite matrix obtained by replacing every
eigenvalue of $M$ by its absolute value. When $M$ is invertible, we
further write
\[
  \operatorname{sgn}(M)=M\abs{M}^{-1}.
\]
It follows directly from the spectral theorem that
\[
  M=\operatorname{sgn}(M)\abs{M}
  =\abs{M}\operatorname{sgn}(M),
  \qquad
  \operatorname{sgn}(M)^T=\operatorname{sgn}(M),
  \qquad
  \operatorname{sgn}(M)^2=\Id.
\]
The meaning of $\abs{\cdot}$ will always be clear from its argument: it denotes the
usual absolute value for scalars, the Euclidean norm for vectors, and
the matrix absolute value above for symmetric matrices. 
\subsection{AI Disclosure}
The author for some time had been experimenting with moment measures and their implications on expected values of
particular functionals of isotropic log-concave random vectors in $\R^n$. In doing this, the author noticed that
the expected third moment tensor of an isotropic log-concave random vector in $\R^n$ could be written in terms of the expected
third derivative of the associated moment map. ChatGPT $5.6$ Pro then drew the authors attention to a variant of \cref{thm:2.5} with $B = \Id$ after experimenting with the the Monge--Amp\`ere equation, from which the author noticed that the proof would hold for general symmetric matrices $B$. ChatGPT $5.6$ Pro also wrote all of the regularity arguments
appearing in the appendix.

\section{\texorpdfstring{Proof of \cref{thm:1.2}}{Proof of Theorem 1.2}}\label{sec:2}

\subsection{The moment-measure argument}

To keep the main argument focused, we work throughout this section under
the regularity assumptions permitted by
\cref{lem:a1}.

Our proof uses moment measures. Given a convex function
$\varphi:\R^n\to\R\cup\{+\infty\}$ such that
\[
  0<\int_{\R^n}e^{-\varphi(y)}\,\dd y<\infty,
\]
its moment measure is the Borel measure $\mu$ on $\R^n$ defined by
\[
  \mu
  =
  (\nabla\varphi)_{\#}
  \left(e^{-\varphi(y)}\,\dd y\right).
\]
Equivalently, for every nonnegative Borel function $f$, or every Borel
function $f\in L^1(\mu)$,
\[
  \int_{\R^n}f(x)\,\dd\mu(x)
  =
  \int_{\R^n}f(\nabla\varphi(y))e^{-\varphi(y)}\,\dd y.
\]
This framework is due to Cordero-Erausquin
and Klartag; see
\cite{CK15}. In particular, we will specifically work in the
setting developed by Klartag
\cite{Klartag2014}. From these papers, our main tools will be \cref{lem:2.1} and \cref{lem:2.2}.
For related work on moment measures and their applications, see
\cite{Klartag2013Poincare,Kolesnikov2014,KM16,KK17a,
Santambrogio2016,FM22}.

The moment-measures theorem of Cordero--Erausquin and Klartag, which is the main construction that this paper follows, states the following:
\begin{lemma}[Cordero--Erausquin and Klartag
{\cite[Theorem~2]{CK15}}]
\label{lem:2.1}
Let $\mu$ be a Borel measure on $\R^n$ such that
$0<\mu(\R^n)<\infty$, its support is not contained in a proper linear
subspace, and its barycenter is at the origin. Then there exists an
essentially continuous convex function
\[
    \varphi:\R^n\longrightarrow\R\cup\{+\infty\}
\]
such that
\[
    (\nabla\varphi)_{\#}
    \left(e^{-\varphi(y)}\,\dd y\right)=\mu.
\]
Moreover, if $\varphi_0$ and $\varphi_1$ both satisfy the conclusion,
then there is a $y_0\in\R^n$ such that, for all $y\in\R^n$,
\[
    \varphi_1(y)=\varphi_0(y-y_0).
\]
\end{lemma}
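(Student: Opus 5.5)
The plan is to prove \cref{lem:2.1} by the variational method: obtain $\varphi$ as the Legendre transform of a maximizer of a concave functional on convex functions. Normalize so that $\mu(\R^n)=1$; the general case follows by adding a constant to $\varphi$, which rescales the total mass of $e^{-\varphi}\,\dd y$. For convex $\psi:\R^n\to\R\cup\{+\infty\}$ that are finite $\mu$-almost everywhere and $\mu$-integrable, set
\[
  \mathcal{F}(\psi)=\log\int_{\R^n}e^{-\psi^*(y)}\,\dd y-\int_{\R^n}\psi\,\dd\mu .
\]
The hoped-for solution is $\varphi=\psi^*$ for a maximizer $\psi$.

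\textbf{Step 1 (invariances).} The functional is unchanged under $\psi\mapsto\psi+c$, since $\psi^*$ shifts by $-c$ and the two changes cancel. It is also unchanged under $\psi\mapsto\psi+\inner{v}{\cdot}$, since $\psi^*$ is then translated by $v$ and $\int\inner{v}{x}\,\dd\mu=0$ because the barycenter of $\mu$ is $0$.

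\textbf{Step 2 (concavity).} Let $\psi_t=(1-t)\psi_0+t\psi_1$. By the definition of the Legendre transform,
\[
  \psi_t^*\bigl((1-t)y_0+ty_1\bigr)\le(1-t)\psi_0^*(y_0)+t\psi_1^*(y_1).
\]
Prékopa--Leindler then shows that $t\mapsto\log\int e^{-\psi_t^*}$ is concave, so $\mathcal{F}$ is concave.

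\textbf{Step 3 (existence of a maximizer).} This is the main obstacle. Use Step 1 to normalize $\psi\ge\psi(0)=0$. The aim is to show that along a maximizing sequence the functions $\psi$ are locally uniformly bounded on the interior of the convex hull of $\operatorname{supp}\mu$. The key input is that the support is not contained in a proper subspace. This gives a quantitative bound $\int\inner{x}{\theta}_+\,\dd\mu\ge c>0$ uniformly in $\theta\in S^{n-1}$, so a large value of $\psi$ somewhere forces $\int\psi\,\dd\mu$ to be large. At the same time, $\log\int e^{-\psi^*}$ grows only logarithmically, because $\psi^*\ge-\psi(0)=0$ and $\psi^*$ is controlled by the sup of $\psi$ on a small ball. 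Combining these gives an a priori bound. Blaschke-type selection for convex functions, Fatou's lemma on the $\mu$-integral, and lower semicontinuity of the Legendre transform then produce a limit that is a maximizer. Essential continuity of $\varphi=\psi^*$ should follow because $\psi$ is finite on the interior of the support's hull, which makes $\{\varphi<\infty\}$ have boundary of $e^{-\varphi}$-negligible weight.

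\textbf{Step 4 (Euler--Lagrange).} For bounded continuous $h$, vary $\psi\mapsto\psi+\varepsilon h$. If $\psi+\varepsilon h$ is not convex, replace it by its convex envelope; this has the same Legendre transform and only decreases $\int(\cdot)\,\dd\mu$, so the maximality comparison still applies. The envelope-theorem formula
\[
  \frac{\dd}{\dd\varepsilon}\Big|_{\varepsilon=0}(\psi+\varepsilon h)^*(y)=-h(\nabla\psi^*(y)),
\]
valid for almost every $y$ where $\psi^*$ is differentiable, is justified by dominated convergence. The vanishing first variation then reads
\[
  \frac{\int h(\nabla\psi^*)e^{-\psi^*}}{\int e^{-\psi^*}}=\int h\,\dd\mu .
\]
Adding the appropriate constant to $\psi^*$ to make $e^{-\psi^*}$ a probability density gives $\varphi$ with $(\nabla\varphi)_\#(e^{-\varphi}\,\dd y)=\mu$.

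\textbf{Step 5 (uniqueness).} Conversely, any solution $\varphi$ makes $\varphi^*$ a maximizer of $\mathcal{F}$. This follows by comparing with an arbitrary $\psi$ via the Fenchel--Young inequality, together with the transport identity $\int\varphi^*\,\dd\mu=\int\bigl(\inner{y}{\nabla\varphi}-\varphi\bigr)e^{-\varphi}$ integrated by parts. Given two maximizers, concavity forces equality in Prékopa--Leindler along the segment joining them. By the equality case, the densities $e^{-\varphi_0}$ and $e^{-\varphi_1}$ are translates of each other; the constants are already fixed by the normalization, so $\varphi_1(y)=\varphi_0(y-y_0)$.
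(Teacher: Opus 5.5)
The paper does not prove this lemma; it quotes it from Cordero-Erausquin and Klartag. Your outline is the variational scheme behind that result, and Steps 1, 2 and 4, together with the compactness part of Step 3, are sound as a sketch.

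\textbf{Essential continuity.} This is the genuine gap. It is part of the existence claim, and it is exactly what makes the uniqueness clause true. The reason you give at the end of Step 3 is false: finiteness of $\psi$ on $\operatorname{int}\operatorname{conv}\operatorname{supp}\mu$ says nothing about the boundary behaviour of $\psi^*$. Take $n=1$, $\mu=\frac12(\delta_{-1}+\delta_1)$, $c_a=\log(2(1-e^{-a}))$ and $\psi_a(x)=a(\abs{x}-1)_+-c_a$. Then $\psi_a$ is finite everywhere. But $\psi_a^*(y)=\abs{y}+c_a$ on $[-a,a]$ and $+\infty$ elsewhere is finite at $\pm a$, so it is not essentially continuous. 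It still has moment measure $\mu$, and it is not a translate of $\abs{y}+\log 2$.

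Essential continuity reflects how $\psi$ behaves at infinity, so it has to be extracted from maximality. One way is a dilation. The map $\psi\mapsto(1+t)\psi$ is admissible for small $\abs{t}$, and $((1+t)\psi)^*(y)=(1+t)\varphi(y/(1+t))$, so $\frac{\dd}{\dd t}\big|_{t=0}\mathcal F((1+t)\psi)=0$. Combining this with the pushforward identity and $\psi(\nabla\varphi(y))=\inner{y}{\nabla\varphi(y)}-\varphi(y)$ gives
\[
  \int\inner{y}{\nabla\varphi(y)}e^{-\varphi(y)}\dd y=n\int e^{-\varphi(y)}\dd y .
\]
Translate so that $0\in K=\operatorname{int}\{\varphi<\infty\}$. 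Integration by parts shows the left-hand side equals $n\int e^{-\varphi}-\int_{\partial K}\inner{y}{n_K(y)}e^{-\varphi(y)}\dd\mathcal H^{n-1}(y)$, with $\inner{y}{n_K(y)}>0$. Hence $\varphi=+\infty$ holds $\mathcal H^{n-1}$-a.e. on $\partial K$. When $\operatorname{supp}\mu$ is compact, as in the paper's application, there is a shortcut: redefine the maximizer to be $+\infty$ off the hull, which makes $\varphi$ finite everywhere.

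\textbf{Step 5.} The comparison you describe does not show that a solution $\varphi$ yields a maximizer $\varphi^*$. Normalize $\int e^{-\varphi}=1$ and apply Fenchel--Young at the same point, $\psi(\nabla\varphi(y))\ge\inner{\nabla\varphi(y)}{y}-\psi^*(y)$, together with $\int\inner{y}{\nabla\varphi}e^{-\varphi}=n$. This yields only $\mathcal F(\psi)\le\log\int e^{-\psi^*}-n+\int\psi^*e^{-\varphi}$, whereas $\mathcal F(\varphi^*)=-n+\int\varphi e^{-\varphi}$. Closing the gap would require $\log\int e^{-\psi^*}\le\int(\varphi-\psi^*)e^{-\varphi}$, which is Jensen's inequality in the wrong direction. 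Already for a translate $\psi^*=\varphi(\cdot-y_0)$ with $y_0\neq0$, one has $\mathcal F(\psi)=\mathcal F(\varphi^*)$, while your bound is strictly larger (possibly $+\infty$).

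The fix is a second transport:
\begin{itemize}
\item let $\nabla u$ be the Brenier map from $e^{-\varphi}\dd y$ to $e^{-\psi^*}\dd y/\int e^{-\psi^*}$, and apply Fenchel--Young at $z=\nabla u(y)$;
\item use $\int\inner{\nabla\varphi}{\nabla u}e^{-\varphi}\ge\int(\Delta_Au)e^{-\varphi}$, which is where essential continuity removes the boundary term;
\item use the Monge--Amp\`ere relation $\psi^*(\nabla u)=\varphi-\log\int e^{-\psi^*}+\log\det\nabla_A^2u$;
\item use $\log\det A\le\Tr A-n$.
\end{itemize}
With both repairs in place, your Dubuc equality-case argument finishes the uniqueness as you describe.
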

We call $\varphi$ the moment map of $\mu$. The properties of the moment map used below
are summarized in \cref{lem:2.2}. Let $\mu$ be an isotropic log-concave
measure on $\R^n$. Concretely, we may assume that $\mu$ is of the form
\[
    \dd\mu(x)
    =\rho(x)\,\dd x
    =e^{-V(x)}\mathbf{1}_{K}(x)\,\dd x,
\]
where $K\subset\R^n$ is bounded, open, and convex, and $V$ is smooth and
convex on an open neighborhood of $\overline K$.

\begin{lemma}[Klartag
{\cite{Klartag2014}}]
\label{lem:2.2}
Under the regularity assumptions of \cref{lem:a1},
the convex function $\varphi$
given by \cref{lem:2.1} is finite and smooth on $\R^n$,
and its Hessian $\nabla^2\varphi$ is positive definite at every point.
Moreover, the map
\[
    \nabla\varphi:\R^n\longrightarrow K
\]
is a diffeomorphism. For all $y\in\R^n$, the Monge--Amp\`ere equation
\[
    \rho(\nabla\varphi(y))\det\nabla^2\varphi(y)
    =e^{-\varphi(y)}
\]
holds pointwise. Equivalently, since $\nabla^2\varphi$ is positive
definite, for all $y\in\R^n$,
\[
    \log\det\nabla^2\varphi(y)
    =-\varphi(y)+V(\nabla\varphi(y)).
\]
\end{lemma}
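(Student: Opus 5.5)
The statement is Klartag's regularity theorem for moment maps. The plan is to reduce it to Caffarelli's regularity theory for optimal transport (Brenier maps) and then bootstrap with Schauder estimates. Let $\varphi$ be given by \cref{lem:2.1} and set $\nu=e^{-\varphi(y)}\dd y$. Since $\varphi$ is convex and $(\nabla\varphi)_{\#}\nu=\mu$, Brenier--McCann uniqueness identifies $\nabla\varphi$ as the optimal (quadratic cost) transport map from $\nu$ to $\mu$. The argument then proceeds in four steps.

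\textbf{Step 1: $\varphi$ is finite on $\R^n$.} Since $\mu$ is supported in $\overline K$ and $K$ is bounded, $\nabla\varphi\in\overline K$ almost everywhere on the domain $\Omega=\{\varphi<\infty\}$. Hence $\varphi$ is Lipschitz on the interior of $\Omega$, with constant $R=\sup_{x\in K}\abs{x}$. Essential continuity means that $\varphi$ tends to $+\infty$ at every boundary point of $\Omega$ where $\Omega$ has positive measure nearby. A Lipschitz function cannot blow up at a finite boundary point, so $\partial\Omega=\emptyset$ and $\Omega=\R^n$. Consequently $e^{-\varphi}$ is locally bounded above and below by positive constants on all of $\R^n$.

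\textbf{Step 2: strict convexity and $C^{1,\alpha}$.} On $K$, the density $\rho=e^{-V}$ is bounded above and below by positive constants, since $V$ is smooth on a neighborhood of $\overline K$, and the target $K$ is convex. Caffarelli's theorem then applies: the Brenier potential is an Alexandrov (weak) solution of
\[
  \det\nabla^2\varphi=\frac{e^{-\varphi}}{\rho(\nabla\varphi)}.
\]
It follows that $\varphi$ is strictly convex and $C^{1,\alpha}_{\mathrm{loc}}$. Caffarelli's interior theory is local on the source side, so the unbounded support of $\nu$ is harmless. We apply it on large balls $B_r$, with target the convex set $K$ and local two-sided density bounds.

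\textbf{Step 3: bootstrap to $C^\infty$.} With $\varphi\in C^{1,\alpha}$, the right-hand side $e^{-\varphi}e^{V(\nabla\varphi)}$ is $C^{\alpha}$. Caffarelli's $C^{2,\alpha}$ estimate then gives $\varphi\in C^{2,\alpha}_{\mathrm{loc}}$ with $\nabla^2\varphi$ uniformly elliptic on compacts. Linearizing the Monge--Amp\`ere equation and applying Schauder theory iteratively gives $\varphi\in C^{k,\alpha}$ for every $k$, because $V$ is smooth. The equation now holds pointwise with a positive right-hand side. Since $\nabla^2\varphi\succeq0$ and $\det\nabla^2\varphi>0$, the Hessian is positive definite everywhere. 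Taking logarithms yields the stated form $\log\det\nabla^2\varphi=-\varphi+V(\nabla\varphi)$.

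\textbf{Step 4: $\nabla\varphi$ is a diffeomorphism onto $K$.} Strict convexity makes $\nabla\varphi$ injective, and the inverse function theorem makes it a local diffeomorphism. Hence $U=\nabla\varphi(\R^n)$ is open, contained in $\overline K$, and therefore contained in $K$, since an open subset of $\overline K$ lies in its interior $K$. To show $U=K$, use the Legendre transform $\varphi^*$: its gradient is the optimal map from $\mu$ back to $\nu$ and inverts $\nabla\varphi$. Applying Caffarelli's theory on $K$, where the source is convex with bounded density and the target has locally bounded density on all of $\R^n$, shows that $\varphi^*$ is $C^1$ and strictly convex on $K$. Then $\nabla\varphi(\nabla\varphi^*(x))=x$ for every $x\in K$, which gives surjectivity onto $K$.

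The main obstacle I expect is Steps 2 and 4. Caffarelli's results are usually stated for bounded source and target domains, so they must be carefully localized here, where the source is all of $\R^n$ with only locally controlled density. My first approach would be Caffarelli's localization argument: restrict the transport to the preimage of a convex subset and use the fact that sections of $\varphi$ are bounded, which follows from $\nabla\varphi$ being bounded and $\varphi\to\infty$ at infinity.
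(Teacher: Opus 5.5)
The paper does not prove this lemma itself; it cites Klartag, whose justification is Caffarelli's regularity theory followed by a Schauder bootstrap. Your overall route is therefore the standard one, and Steps 1 and 3 are fine. However, the two steps you flag as obstacles do not go through as written.

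In Step 2, strict convexity is not a local consequence of two-sided bounds on $\det\nabla^2\varphi$ on balls $B_r$. Caffarelli's localization theorem only says that a contact set $\Sigma=\{\varphi=\ell\}$ of a supporting affine function $\ell$, if it is not a point, has no extreme points where the bounds hold. On a ball, such a $\Sigma$ can simply cross it (Pogorelov's examples for $n\ge 3$). The missing input is global. Because the bounds hold locally on all of $\R^n$, $\Sigma$ has no extreme points at all, so, being closed and convex, it contains a line on which $\varphi$ is affine. This is impossible, since $\int e^{-\varphi}<\infty$ forces $\varphi(y)\ge a\abs{y}-b$ with $a>0$. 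Your suggested substitute, boundedness of sections, is not available a priori. It holds for slopes in $K$, but before Step 4 a supporting slope could lie on $\partial K$, where sections of a convex function with bounded gradient can be unbounded. Once strict convexity is established this way, the local $C^{1,\alpha}$ and $C^{2,\alpha}$ theory applies as you describe.

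In Step 4, Caffarelli's theorem does not apply to the reverse transport from $\mu$ to $\nu$. It needs the target density bounded below, whereas $e^{-\varphi}$ decays on the unbounded target $\R^n$. Any localization would presuppose that $\varphi^*$ is finite on $K$, which is precisely what is missing. The appeal is also unnecessary. Since $\mu(U)=\nu(\R^n)=1$ and $\rho>0$ on $K$, the set $U=\nabla\varphi(\R^n)$ is dense in $K$. Moreover, $\varphi^*$ is finite on $U$, because $\varphi^*(\nabla\varphi(y))=\inner{\nabla\varphi(y)}{y}-\varphi(y)$. So the domain of $\varphi^*$ is a convex set containing a dense subset of the open convex set $K$, hence contains $K$. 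Thus for $x\in K$ one has $\partial\varphi^*(x)\neq\emptyset$, and any $y\in\partial\varphi^*(x)$ satisfies $x\in\partial\varphi(y)=\{\nabla\varphi(y)\}$. Hence $U=K$, and $\nabla\varphi$ is a smooth bijection onto $K$ with invertible differential, hence a diffeomorphism, with no second appeal to Caffarelli.
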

Before going through the remainder of the proof, we will explain our strategy at hand, and the motivation behind the construction. Set
\[
    \dd\nu(y)=e^{-\varphi(y)}\,\dd y,
\]
so that $(\nabla\varphi)_{\#}\nu=\mu$. Since $\mu$ is a probability
measure, we see that $\nu$ is a probability
measure. In particular, if $Y$ has law $\nu$, then
$\nabla\varphi(Y)$ has law $\mu$. We will pass to understand the problem under $\nu$. Since $\nabla\varphi(\R^n)=K$ and $K$ is bounded,
$\nabla\varphi$ is bounded. Moreover,
since $\nabla^2\varphi(y)\succeq0$, \cite[Theorem~1]{Klartag2014}
gives, for every $y\in\R^n$,
\[
  0\preceq\nabla^2\varphi(y)\preceq2\sup_{x\in K}\abs{x}^2\Id.
\]
Thus both $\nabla\varphi$ and $\nabla^2\varphi$ are bounded on $\R^n$.

By \cref{lem:a3} we may use a standard bump function argument to show that if $f,g:\R^n\to\R$ are smooth and
$(\partial_i f)g$, $f\partial_i g$, $fg\partial_i\varphi$, and $fg$
belong to $L^1(\nu)$, then
\begin{equation}\label{eq:2.1}
  \E_{Y\sim\nu}\!\left[(\partial_i f)(Y)g(Y)\right]
  =
  \E_{Y\sim\nu}\!\left[
    f(Y)\bigl(\partial_i\varphi(Y)g(Y)-\partial_i g(Y)\bigr)
  \right].
\end{equation}
When $g\equiv1$ and the remaining terms are integrable, this becomes
\[
  \E_{Y\sim\nu}\partial_i f(Y)
  =
  \E_{Y\sim\nu}\!\left[f(Y)\partial_i\varphi(Y)\right].
\]
We also use the weighted Laplace-type operator
\[
  \mc{L}f
  =
  \varphi^{ij}\partial_{ij}f
  -(V_i\circ\nabla\varphi)\partial_i f.
\]
This operator is a Markov diffusion operator, which is symmetric on $L^2(\nu)$. The coordinate formula is computed in
\cite[Lemma~5]{Klartag2014}; see also
\cite{BGL14} for general background on Markov diffusion operators. The key defining formula about $\mc{L}$ is that one has
\begin{equation}\label{eq:2.2}
  \E_{Y\sim\nu}\!\left[(\mc{L}f)(Y)g(Y)\right]
  =
  -\E_{Y\sim\nu}\!\left[
    \varphi^{ij}(Y)(\partial_i f)(Y)(\partial_j g)(Y)
  \right]
\end{equation}
whenever $f$ or $g$ is compactly supported, this is
\cite[Equations~53--55]{Klartag2014}. In particular, for
$f\in C_c^\infty(\R^n)$, taking $g\equiv1$ gives
\[
  \E_{Y\sim\nu}\mc{L}f(Y)=0.
\]
The case where both $f$ and $g$ are not known to be compact will be used one time in this paper, and is justified by
\cref{lem:a5}; see the argument following \cref{eq:2.10} for the case where one must invoke \cref{lem:a5}.

Moving back to understanding $\nu$, we may lose in the fact that $\nu$ is isotropic. Wonderfully though,
we have the following as a suitable replacement which states that $\nabla^2 \varphi$ is on average the identity under $\nu$, which in
some sense can be seen as a suitable replacement.
\begin{lemma}\label{lem:2.3}
  One has that, with $\nu$ defined as above
  \[
    \E_{Y \sim \nu} \nabla^2 \varphi(Y) = \Id.
  \]
\end{lemma}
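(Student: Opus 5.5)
We plan to prove \cref{lem:2.3} by integrating by parts under $\nu$, using the integration by parts formula \cref{eq:2.1} with $g\equiv 1$. The idea is to express each entry of $\nabla^2\varphi$ as a derivative and move it onto the density.

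Fix indices $i,j$ and take $f=\partial_j\varphi$ in \cref{eq:2.1}. Then $\partial_i f=\partial_{ij}\varphi$, and \cref{eq:2.1} gives
\[
  \E_{Y\sim\nu}\partial_{ij}\varphi(Y)
  =
  \E_{Y\sim\nu}\!\left[\partial_j\varphi(Y)\,\partial_i\varphi(Y)\right].
\]
Since $\nabla\varphi(Y)\sim\mu$ when $Y\sim\nu$, the right-hand side equals $\E_{X\sim\mu}X_iX_j$. By isotropy of $\mu$ this is $\delta_{ij}$, which is exactly the claim.

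It remains to check the hypotheses of \cref{eq:2.1}, which here (with $g\equiv1$) means that $\partial_{ij}\varphi$, $\partial_j\varphi$, and $\partial_i\varphi\,\partial_j\varphi$ lie in $L^1(\nu)$. Under the regularity assumptions of \cref{lem:a1}, \cref{lem:2.2} shows that $\varphi$ is smooth and that $\nabla\varphi$ takes values in the bounded set $K$. The cited bound $0\preceq\nabla^2\varphi\preceq 2\sup_{K}\abs{x}^2\Id$ shows that $\nabla^2\varphi$ is bounded. So all three functions are bounded and smooth, hence in $L^1(\nu)$ because $\nu$ is a probability measure.

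The only potential obstacle is the justification of the boundary-free integration by parts itself. This is delegated to \cref{lem:a3} via the bump function argument. If a direct check were needed, I would multiply by a cutoff $\chi_R(y)=\chi(y/R)$, integrate by parts exactly, and let $R\to\infty$. The error term is $R^{-1}\E\abs{\partial_j\varphi\,\nabla\chi(Y/R)}$, which vanishes because $\partial_j\varphi$ is bounded.
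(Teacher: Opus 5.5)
Your proposal is correct and is essentially the paper's proof. You integrate by parts via \cref{eq:2.1} (that is, \cref{lem:a3}) with $f$ a first partial of $\varphi$ and $g\equiv1$, justify the integrability hypotheses by boundedness of $\nabla\varphi$ and $\nabla^2\varphi$, and conclude from isotropy of $\mu=(\nabla\varphi)_{\#}\nu$. The only difference is that the paper takes $f=\partial_i\varphi$ in coordinate $j$ rather than $f=\partial_j\varphi$ in coordinate $i$, which is immaterial.
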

\begin{proof}
Fix $1\leq i,j\leq n$. The functions $\partial_i\varphi$,
$\partial_{ij}\varphi$, and
$\partial_i\varphi\,\partial_j\varphi$ are bounded, and $\nu$ is a
probability measure. Hence the hypotheses of \cref{lem:a3}
are satisfied. Applying \cref{eq:2.1} in coordinate
$j$, with $f=\partial_i\varphi$ and $g\equiv1$, gives
\begin{align*}
  \E_{Y\sim\nu}\partial_{ij}\varphi(Y)
  &=\E_{Y\sim\nu}
    \bigl[\partial_i\varphi(Y)\partial_j\varphi(Y)\bigr].
\end{align*}
Since
$(\nabla\varphi)_{\#}\nu=\mu$, the random vector
$\nabla\varphi(Y)$ has law $\mu$. Isotropy of $\mu$ therefore yields
\[
  \E_{Y\sim\nu}
  \bigl[\partial_i\varphi(Y)\partial_j\varphi(Y)\bigr]
  =\E_{X\sim\mu}[X_iX_j]
  =\delta_{ij}.
\]
The result follows entrywise.
\end{proof}

We now turn to the Monge--Amp\`ere equation from \cref{lem:2.2}. \Cref{thm:2.5} is our
key result that allows us to obtain bounds on the Poincar\'e constant
for quadratic forms, and hence on $\psi_n$. In order to do this, we will differentiate the Monge--Amp\`ere equation
and obtain \cref{lem:2.4}, which will be crucial for the proof of \cref{thm:2.5}.

We also note that the following
product rule holds for $\mc{L}$. For smooth
functions $f,g:\R^n\to\R$, expanding the derivatives in coordinates
gives
\begin{equation}\label{eq:2.3}
\begin{aligned}
  \mc{L}(fg)
  &=\varphi^{ij}\partial_{ij}(fg)
    -(V_i\circ\nabla\varphi)\partial_i(fg)\\
  &=f\mc{L}g+g\mc{L}f
    +2\varphi^{ij}(\partial_i f)(\partial_j g).
\end{aligned}
\end{equation}
The two mixed second-derivative terms agree because
$(\varphi^{ij})$ is symmetric.
The next identity follows by differentiating the Monge--Amp\`ere
equation from \cref{lem:2.2} twice.
\begin{lemma}\label{lem:2.4}
For every $1\leq i,j\leq n$, one has
\[
  \mc{L}(\partial_{ij}\varphi)+\partial_{ij}\varphi
  =\partial_{ia}\varphi\,V_{ab}\partial_{bj}\varphi
  +\varphi^{ac}\varphi^{bd}\partial_{abi}\varphi\,
    \partial_{cdj}\varphi.
\]
Here $V_{ab}$ is evaluated at $\nabla\varphi$. Moreover, pointwise, the
two matrices
\[
  \left(
    \partial_{ia}\varphi\,V_{ab}\partial_{bj}\varphi
  \right)_{i,j=1}^n
\]
and
\[
  \bigl(
    \varphi^{ac}\varphi^{bd}\partial_{abi}\varphi\,
    \partial_{cdj}\varphi
  \bigr)_{i,j=1}^n
\]
are positive semidefinite.
\end{lemma}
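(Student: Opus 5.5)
Take the logarithm of the Monge--Amp\`ere equation from \cref{lem:2.2}, in the form
\[
  \log\det\nabla^2\varphi=-\varphi+V\circ\nabla\varphi ,
\]
and differentiate it twice, first in $y_i$ and then in $y_j$.

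\emph{First derivative.} Using $\partial_i\log\det\nabla^2\varphi=\varphi^{ab}\partial_{abi}\varphi$, we get
\[
  \varphi^{ab}\partial_{abi}\varphi=-\partial_i\varphi+(V_a\circ\nabla\varphi)\,\partial_{ai}\varphi .
\]

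\emph{Second derivative.} Differentiate again in $y_j$. The derivative of the inverse matrix is $\partial_j\varphi^{ab}=-\varphi^{ac}\partial_{cdj}\varphi\,\varphi^{db}$. Hence the left side becomes
\[
  \varphi^{ab}\partial_{abij}\varphi-\varphi^{ac}\varphi^{bd}\partial_{abi}\varphi\,\partial_{cdj}\varphi ,
\]
after relabeling indices and using the symmetry of third derivatives. The right side becomes
\[
  -\partial_{ij}\varphi+(V_{ab}\circ\nabla\varphi)\,\partial_{bj}\varphi\,\partial_{ai}\varphi+(V_a\circ\nabla\varphi)\,\partial_{aij}\varphi .
\]

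\emph{Regrouping.} Since
\[
  \mc{L}(\partial_{ij}\varphi)=\varphi^{ab}\partial_{ab}(\partial_{ij}\varphi)-(V_a\circ\nabla\varphi)\,\partial_a(\partial_{ij}\varphi),
\]
move the term $(V_a\circ\nabla\varphi)\partial_{aij}\varphi$ to the left and the quartic term to the right. This gives exactly
\[
  \mc{L}(\partial_{ij}\varphi)+\partial_{ij}\varphi=\partial_{ia}\varphi\,V_{ab}\,\partial_{bj}\varphi+\varphi^{ac}\varphi^{bd}\partial_{abi}\varphi\,\partial_{cdj}\varphi .
\]
The main care needed is the index bookkeeping in the inverse-derivative term, so that the contraction pattern matches the stated form. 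Smoothness of $\varphi$, positive definiteness of $\nabla^2\varphi$ (from \cref{lem:2.2}), and smoothness of $V$ near $\overline K$ justify all the differentiations.

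\emph{Positive semidefiniteness of the first matrix.} In matrix form it is $H(\nabla^2V)H$ with $H=\nabla^2\varphi$ symmetric. Since $V$ is convex, $\nabla^2V\succeq0$ on $K$. Therefore for any $\xi$,
\[
  \inner{H(\nabla^2V)H\xi}{\xi}=\inner{(\nabla^2V)H\xi}{H\xi}\ge0 .
\]

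\emph{Positive semidefiniteness of the second matrix.} For each $i$, let $A_i$ be the symmetric matrix $(\partial_{abi}\varphi)_{a,b}$ and set $G=(\nabla^2\varphi)^{-1}\succ0$. Then the $(i,j)$ entry is
\[
  \Tr(GA_iGA_j)=\inner{G^{1/2}A_iG^{1/2}}{G^{1/2}A_jG^{1/2}}_{\HS} ,
\]
so the matrix is a Gram matrix. Concretely,
\[
  \xi_i\xi_j\Tr(GA_iGA_j)=\norm{G^{1/2}(\textstyle\sum_i\xi_iA_i)G^{1/2}}_{\HS}^2\ge0 .
\]
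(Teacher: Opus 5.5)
Your proposal is correct and matches the paper's proof almost exactly. You differentiate the logarithmic Monge--Amp\`ere equation twice using Jacobi's formula and $\partial_j\varphi^{ab}=-\varphi^{ac}\partial_{cdj}\varphi\,\varphi^{db}$, and you regroup $\varphi^{ab}\partial_{abij}\varphi-V_a\partial_{aij}\varphi$ into $\mc{L}(\partial_{ij}\varphi)$. You then get positive semidefiniteness from convexity of $V$ for $\nabla^2\varphi\,\nabla^2V\,\nabla^2\varphi$, and from the Hilbert--Schmidt Gram-matrix representation for the second matrix; the only slip is calling that term ``quartic'' when it is quadratic in third derivatives.
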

\begin{proof}
For ease of notation: all derivatives of $V$
below are evaluated at $\nabla \varphi$.
Since all the terms in the Monge--Amp\`ere equation appearing in \cref{lem:2.2} are positive, we may take logarithms and differentiate
\[
  \log\det\nabla^2\varphi=-\varphi+V(\nabla\varphi)
\]
twice. We first differentiate with respect to the $i$th coordinate. By
Jacobi's formula and the chain rule,
\[
  \partial_i\log\det\nabla^2\varphi
  =\Tr\!\left(
    (\nabla^2\varphi)^{-1}
    \partial_i(\nabla^2\varphi)
  \right)
  =\varphi^{ab}\partial_{abi}\varphi,
  \qquad
  \partial_i\bigl[V(\nabla\varphi)\bigr]
  =V_a\partial_{ai}\varphi.
\]
We therefore obtain the first-derivative of the Monge--Amp\`ere equation
\begin{equation}\label{eq:2.4}
  \varphi^{ab}\partial_{abi}\varphi
  =-\partial_i\varphi+V_a\partial_{ai}\varphi.
\end{equation}
Recall that $(\varphi^{ij}) = (\nabla^2 \varphi)^{-1}$, so differentiating the equation
$\varphi^{ia}\partial_{aj}\varphi=\delta^i_j$ gives, for every $k$,
\begin{equation}\label{eq:2.5}
  \partial_k\varphi^{ij}
  =-\varphi^{ia}\partial_{abk}\varphi\,\varphi^{bj}.
\end{equation}
We next return to the first-derivative of the Monge--Amp\`ere equation \cref{eq:2.4} and differentiate it
with respect to the $j$th coordinate. Using \cref{eq:2.5}, the
derivative of the left-hand side is
\begin{align*}
  \partial_j\left(
    \varphi^{ab}\partial_{abi}\varphi\right)
  &=\varphi^{ab}\partial_{abij}\varphi
    +(\partial_j\varphi^{ab})\partial_{abi}\varphi\\
  &=\varphi^{ab}\partial_{abij}\varphi
    -\varphi^{ac}\varphi^{bd}
      \partial_{abi}\varphi\,\partial_{cdj}\varphi.
\end{align*}
On the other hand, the derivative of the right-hand side of
\cref{eq:2.4} is
\begin{align*}
  \partial_j\left(
    -\partial_i\varphi+V_a\partial_{ai}\varphi\right)
  &=-\partial_{ij}\varphi
    +V_{ab}\partial_{bj}\varphi\,\partial_{ai}\varphi
    +V_a\partial_{aij}\varphi.
\end{align*}
Equating these two expressions gives
\begin{align*}
  \varphi^{ab}\partial_{abij}\varphi
  -\varphi^{ac}\varphi^{bd}
    \partial_{abi}\varphi\,\partial_{cdj}\varphi
  =-\partial_{ij}\varphi
    +V_{ab}\partial_{ai}\varphi\,\partial_{bj}\varphi
    +V_a\partial_{aij}\varphi.
\end{align*}
Note now that the two terms that are linear in derivatives of
$\partial_{ij}\varphi$ combine exactly as $\mc{L}$
\[
  \varphi^{ab}\partial_{abij}\varphi
  -V_a\partial_{aij}\varphi
  =\mc{L}(\partial_{ij}\varphi).
\]
Thus, we obtain the following equation as the second-derivative of the Monge--Amp\`ere equation
\begin{equation}\label{eq:2.6}
  \mc{L}(\partial_{ij}\varphi)+\partial_{ij}\varphi
  =\partial_{ia}\varphi\,V_{ab}\partial_{bj}\varphi
  +\varphi^{ac}\varphi^{bd}\partial_{abi}\varphi\,
    \partial_{cdj}\varphi.
\end{equation}
The convexity of $V$ implies that the matrix
\[
  \left(\partial_{ia}\varphi\,V_{ab}
  \partial_{bj}\varphi\right)_{i,j=1}^n
  =\nabla^2\varphi\cdot\nabla^2V(\nabla\varphi)\cdot\nabla^2\varphi
\]
is positive semidefinite. The matrix
\[
  \left(
    \varphi^{ac}\varphi^{bd}\partial_{abi}\varphi\,
    \partial_{cdj}\varphi
  \right)_{i,j=1}^n
\]
is positive semidefinite as well, since it is the Gram matrix, with
respect to the Hilbert--Schmidt inner product for $1 \le i \le n$ of the matrices
\[
  (\nabla^2\varphi)^{-1/2}\partial_i\nabla^2\varphi
  (\nabla^2\varphi)^{-1/2}.
\]
Indeed,
\begin{align*}
&\left\langle
(\nabla^2\varphi)^{-1/2}
\partial_i\nabla^2\varphi
(\nabla^2\varphi)^{-1/2},
\,
(\nabla^2\varphi)^{-1/2}
\partial_j\nabla^2\varphi
(\nabla^2\varphi)^{-1/2}
\right\rangle_{\mathrm{HS}}
\\[4pt]
&=
\operatorname{Tr}\!\left(
(\nabla^2\varphi)^{-1}
\partial_i\nabla^2\varphi
(\nabla^2\varphi)^{-1}
\partial_j\nabla^2\varphi
\right)
\\[4pt]
&=
\varphi^{ac}\varphi^{bd}
\partial_{abi}\varphi\,
\partial_{cdj}\varphi.\qedhere
\end{align*}
\end{proof}

\begin{theorem}\label{thm:2.5}\footnote{The motivation to obtain \cref{thm:2.5} came from reverse engineering the remainder of the proof to come, which used a promising trick \cref{eq:2.21} with a $H^{-1}$ inequality combined with the knowledge that $\tau_\mu
    =\nabla^2\varphi\circ(\nabla\varphi)^{-1}$ is a Stein kernel for $\mu$ (this observation is due to Fathi \cite{Fathi2019}). The author considered studying the following candidates: $(\nabla^2\varphi)^{1/2}B(\nabla^2\varphi)^{1/2}$ has the desired squared norm $\Tr(B\nabla^2\varphi B\nabla^2\varphi)$, but no explicit mean or simple derivative. However, for $B\succeq0$, the matrix $B^{1/2}\nabla^2\varphi B^{1/2}$ has mean $B$, differentiates well, and has the desired squared norm. ChatGPT 5.6 Pro then differentiated the Monge--Amp\`ere equation and combined all the tools together.}
  With $\nu$ defined as above, for any symmetric matrix $B$, one has
  \[
    \E_{Y\sim\nu}
    \Tr\!\left(B\nabla^2\varphi(Y)B\nabla^2\varphi(Y)\right)
    \le
    2\Tr(B^2).
  \]
\end{theorem}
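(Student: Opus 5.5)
The plan is to \emph{linearise} the quantity around its mean and then control the fluctuation with the second-order Monge--Amp\`ere identity of \cref{lem:2.4}. Write $H=\nabla^2\varphi$ and $D=H-\Id$. By \cref{lem:2.3}, $\E_{Y\sim\nu}D=0$, so expanding gives
\[
  \E_{Y\sim\nu}\Tr(BHBH)=\Tr(B^2)+\E_{Y\sim\nu}\Tr(BDBD).
\]
It therefore suffices to show
\[
  \E_{Y\sim\nu}\Tr(BDBD)\le\Tr(B^2).
\]

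The first source of information is \cref{lem:2.4} integrated against the constant $1$, using $\E_\nu\mc{L}f=0$. This gives
\[
  \E_\nu H=\Id=\E_\nu P+\E_\nu Q,
\]
where $P=H\,\nabla^2V(\nabla\varphi)\,H$ and $Q_{ij}=\Tr(H^{-1}\partial_iH\,H^{-1}\partial_jH)$ are the two positive semidefinite matrices of \cref{lem:2.4}. Consequently
\[
  \E_\nu\Tr(BPB)+\E_\nu\Tr(BQB)=\Tr(B^2).
\]
The target is thus equivalent to
\[
  \E_\nu\Tr(BDBD)\le\E_\nu\Tr(BPB)+\E_\nu\Tr(BQB),
\]
a comparison between the fluctuation of $H$ and the ``curvature'' terms $P,Q$.

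The second source of information is integration by parts. Consider the scalar function $f=\Tr(BD)=B_{ab}\partial_{ab}\varphi-\Tr B$, together with the matrix-valued observable $BDB$. I would compute $\E_\nu\Tr(BDBD)$ by writing one factor $D_{da}=\partial_{da}\varphi-\delta_{da}$ and applying \cref{eq:2.1} in the index $d$. This produces a term
\[
  \E_\nu\langle B\nabla\varphi,\,DB\nabla\varphi\rangle
\]
and a term
\[
  -\E_\nu\bigl[\partial_a(BDB)_{ad}\,\partial_d\varphi\bigr],
\]
which involves third derivatives $\partial_{abc}\varphi$. The first-order Monge--Amp\`ere identity \cref{eq:2.4} rewrites traces of third derivatives as $-\partial_i\varphi+V_a\partial_{ai}\varphi$. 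Together with Fathi's observation that $H\circ(\nabla\varphi)^{-1}$ is a Stein kernel for $\mu$, this should let me express the whole quantity through $\nabla\varphi$, $H$, $\nabla^2V$ and the tensor $\partial_iH$.

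The remaining third-derivative terms I would bound by Cauchy--Schwarz in the $H^{-1}$-weighted Hilbert--Schmidt geometry. This is where the Gram-matrix description of $Q$ in \cref{lem:2.4} enters: $\Tr(BQB)$ is exactly the squared norm of $\sum_i B_{ik}H^{-1/2}\partial_iH\,H^{-1/2}$. I would then use $ab\le\frac12a^2+\frac12b^2$ to absorb the cross terms into $\E\Tr(BQB)$ and $\E\Tr(BDBD)$ itself. The aim is an inequality of the form $\E\Tr(BDBD)\le\E\Tr(BQB)+\E\Tr(BPB)$, matching the previous display.

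A complementary route I would try in parallel is to pair \cref{eq:2.6} with the matrix $BDB$ and integrate, using \cref{eq:2.2}. This gives
\[
  \E\Tr(BDBD)=\E\Tr\bigl(BDB(P+Q-\Id)\bigr)+\E\bigl[\varphi^{kl}\Tr(B\partial_kHB\partial_lH)\bigr].
\]
Because both sides grow, one needs a second identity to cancel the gradient term. I would get it from applying \cref{eq:2.2} to the product formula \cref{eq:2.3} for $f=\Tr(BD)$, i.e.\ from $\E\mc{L}(f^2)=0$. This step needs \cref{lem:a5}, since neither factor is compactly supported.

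The main obstacle is this sign and constant bookkeeping: producing exactly the constant $2$ rather than some $C$. Equality should hold for the product of exponentials with $B=\Id$, so every Cauchy--Schwarz step must be tight in that example, which I would use as a check. If the direct pairing fails, the fallback is a Brascamp--Lieb/$H^{-1}$ inequality for $\nu$ with respect to the metric $\varphi^{ij}$, i.e.\ a spectral-gap bound for $-\mc{L}$, applied to the scalar $\Tr(BD)$ and combined with the identity above.
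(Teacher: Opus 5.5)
There is a genuine gap: none of your closing steps controls the gradient term
\[
  G=\E_{Y\sim\nu}\bigl[\varphi^{kl}\Tr(B\,\partial_kH\,B\,\partial_lH)\bigr],
\]
and that is where the whole argument lives. Your route-2 identity is the right starting point. Adding your identity $\E\Tr(BPB)+\E\Tr(BQB)=\Tr(B^2)$ to it, and using $\E\Tr(B^2D)=0$, turns it into
\[
  \E\Tr(BHBH)=G+\E\Tr(BHB\,P)+\E\Tr(BHB\,Q).
\]
This is the identity the paper obtains by applying $\mc{L}$ to $\Tr(BHBH)$ and integrating with \cref{lem:a5}. But your device for ``cancelling'' $G$ does not work. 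The identity $\E\mc{L}(f^2)=0$ for the scalar $f=\Tr(BD)$ produces the quadratic form $\varphi^{kl}\Tr(B\partial_kH)\Tr(B\partial_lH)$, which is a different object and says nothing about $G$. Your fallback of Brascamp--Lieb applied to the scalar $\Tr(BD)$ has the same defect: it only bounds $\Var\Tr(BH)$, not $\E\Tr(BDBD)$. Route~1 does not help either. There $\partial_d(BHB)_{ad}$ contracts $\nabla^3\varphi$ against $B\otimes B$ rather than $H^{-1}$, so \cref{eq:2.4} does not simplify it. The Cauchy--Schwarz absorption is then left open exactly where the constant $2$ is decided.

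What is missing are two inequalities that sandwich $G$. First, a pointwise algebraic comparison. At fixed $y$, work in coordinates with $H(y)=\Id$ and $B=\operatorname{diag}(b_1,\dots,b_n)$; both contractions are invariant if $B$ transforms as a contravariant $2$-tensor. Then
\[
  \Tr(BHB\,Q)-\varphi^{kl}\Tr(B\partial_kHB\partial_lH)=\frac12\sum_{i,j,k}(b_i-b_j)^2(\partial_{ijk}\varphi)^2\ge0,
\]
and also $\Tr(BHB\,P)\ge0$. Inserted into the identity above, this gives $2G\le\E\Tr(BHBH)=\Tr(B^2)+\E\Tr(BDBD)$.

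Second, for $B\succeq0$, apply Brascamp--Lieb for $\nu$ (\cref{lem:a6}) to each entry of $B^{1/2}HB^{1/2}$, whose mean is $B$ by \cref{lem:2.3}. This gives $\E\Tr(BDBD)=\sum_{i,j}\Var\bigl((B^{1/2}HB^{1/2})_{ij}\bigr)\le G$. Combining the two, $\E\Tr(BDBD)\le G\le\frac12\bigl(\Tr(B^2)+\E\Tr(BDBD)\bigr)$, which is your reduced target.

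Finally, the entrywise Brascamp--Lieb step genuinely needs $B\succeq0$. With indefinite $B$ the weights $b_ib_j$ can be negative, so the entrywise variance bounds cannot be summed. You therefore also need the reduction $\Tr(BHBH)\le\Tr(\abs{B}H\abs{B}H)$, which your proposal does not mention.
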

\begin{proof}
We will start by considering positive semidefinite symmetric matrices $B$. 
The indefinite case will follow promptly after. Let $B\succeq0$. We apply the product rule in \cref{eq:2.3} to the quantity in the
statement of the theorem. In coordinates,
\[
  \Tr\!\left(
    B\nabla^2\varphi B\nabla^2\varphi
  \right)
  =
  B_{ij}\partial_{jk}\varphi\,
  B_{k\ell}\partial_{\ell i}\varphi.
\]
Since the entries of $B$ are constant, applying $\mc{L}$ gives
\[
  \mc{L}\!\left[\Tr\!\left(B\nabla^2\varphi B\nabla^2\varphi\right)\right]
  =B_{ij}B_{k\ell}\,\mc{L}\!\left(
    \partial_{jk}\varphi\,\partial_{\ell i}\varphi
  \right).
\]
The product rule in \cref{eq:2.3} now yields
\begin{equation}\label{eq:2.7}
  \mc{L}\!\left[\Tr\!\left(B\nabla^2\varphi B\nabla^2\varphi\right)\right]
  =B_{ij}B_{k\ell}\mc{L}(\partial_{jk}\varphi)\partial_{\ell i}\varphi
  +B_{ij}B_{k\ell}\partial_{jk}\varphi\mc{L}(\partial_{\ell i}\varphi)
  +2\varphi^{ab}B_{ij}B_{k\ell}\partial_{ajk}\varphi\partial_{b\ell i}\varphi.
\end{equation}
The first term on the right-hand side of \cref{eq:2.7} is
\[
  \Tr\!\left(
    B\mc{L}(\nabla^2\varphi)
    B\nabla^2\varphi
  \right),
\]
where $\mc{L}$ acts entrywise on $\nabla^2\varphi$. The second term on
the right-hand side of \cref{eq:2.7} is
\[
  \Tr\!\left(
    B\nabla^2\varphi
    B\mc{L}(\nabla^2\varphi)
  \right).
\]
These two traces are equal. Indeed, transposing the matrix inside the
first trace and then using cyclicity of the trace gives
\[
  \Tr\!\left(
    B\mc{L}(\nabla^2\varphi)
    B\nabla^2\varphi
  \right)
  =\Tr\!\left(
    \nabla^2\varphi B
    \mc{L}(\nabla^2\varphi)B
  \right)
  =\Tr\!\left(
    \bigl(B\nabla^2\varphi B\bigr)\cdot
    \mc{L}(\nabla^2\varphi)
  \right).
\]
The third term on the right-hand side of \cref{eq:2.7} may be written as
\[
  2\varphi^{ab}
  \Tr\!\left(
    \bigl(B\partial_a(\nabla^2\varphi)B\bigr)\cdot
    \partial_b(\nabla^2\varphi)
  \right).
\]
We have therefore shown that
\begin{equation}\label{eq:2.8}
  \mc{L}\!\left[\Tr\!\left(B\nabla^2\varphi B\nabla^2\varphi\right)\right]
  =2\Tr\!\left(
    \bigl(B\nabla^2\varphi B\bigr)\cdot
    \mc{L}(\nabla^2\varphi)
  \right)
  +2\varphi^{ab}
  \Tr\!\left(
    \bigl(B\partial_a(\nabla^2\varphi)B\bigr)\cdot
    \partial_b(\nabla^2\varphi)
  \right).
\end{equation}

We next use \cref{lem:2.4}, specifically \cref{eq:2.6}. Since that
identity holds for every pair of indices, it says entrywise that
\begin{equation}\label{eq:2.9}
  \mc{L}(\nabla^2\varphi)
  =-\nabla^2\varphi
  +\nabla^2\varphi\,
    \nabla^2V(\nabla\varphi)\,
    \nabla^2\varphi
  +
  \left(
    \varphi^{ac}\varphi^{bd}
    \partial_{abi}\varphi\,
    \partial_{cdj}\varphi
  \right)_{i,j=1}^n.
\end{equation}
Substituting \cref{eq:2.9} into \cref{eq:2.8} and using linearity of the
trace yields the pointwise identity
\begin{equation}\label{eq:2.10}
\begin{aligned}
  &\mc{L}\!\left[
    \Tr\!\left(
      B\nabla^2\varphi B\nabla^2\varphi
    \right)
  \right]\\
  &=-2\Tr\!\left(
    B\nabla^2\varphi B\nabla^2\varphi
  \right)\\
  &\quad+2\varphi^{ab}
  \Tr\!\left(
    \bigl(B\partial_a(\nabla^2\varphi)B\bigr)\cdot
    \partial_b(\nabla^2\varphi)
  \right)\\
  &\quad+2\Tr\!\left(
    \bigl(B\nabla^2\varphi B\bigr)\cdot
    \bigl(\nabla^2\varphi\,\nabla^2V(\nabla\varphi)\,\nabla^2\varphi\bigr)
  \right)\\
  &\quad+2\left(B\nabla^2\varphi B\right)_{ij}
  \varphi^{ac}\varphi^{bd}
  \partial_{abi}\varphi\,
  \partial_{cdj}\varphi.
\end{aligned}
\end{equation}

We first justify integrating this pointwise identity. The goal is to apply $\E_{Y \sim \nu}$ to kill the left hand side of \cref{eq:2.10} after using \cref{eq:2.2}. Recall that $\nabla^2\varphi$ is bounded. Since $B\succeq0$, we have
\[
  \Tr\!\left(
    B\nabla^2\varphi B\nabla^2\varphi
  \right)
  =\left\|
    B^{1/2}\nabla^2\varphi B^{1/2}
  \right\|_{\mathrm{HS}}^2.
\]
It follows that this trace is a bounded nonnegative function.

We next check that each of the three terms accompanying the negative
term on the right-hand side of \cref{eq:2.10} is nonnegative. For the
first such term in \cref{eq:2.10}, direct
expansion of a squared Hilbert--Schmidt norm gives
\[
  \varphi^{ab}
  \Tr\!\left(
    \bigl(B\partial_a(\nabla^2\varphi)B\bigr)\cdot
    \partial_b(\nabla^2\varphi)
  \right)
  =\sum_{r=1}^n
  \left\|
    \sum_{a=1}^n
    \bigl((\nabla^2\varphi)^{-1/2}\bigr)_{ra}
    B^{1/2}\partial_a(\nabla^2\varphi)B^{1/2}
  \right\|_{\mathrm{HS}}^2
  \geq0.
\]
For the second such term in \cref{eq:2.10}, $B\cdot\nabla^2\varphi\cdot B\succeq0$, while $\nabla^2\varphi\cdot\nabla^2V(\nabla\varphi)\cdot\nabla^2\varphi\succeq0$ by \cref{lem:2.4}. The trace of their product is therefore non-negative.
For the third such term in \cref{eq:2.10}, the matrix
\[
  \left(
    \varphi^{ac}\varphi^{bd}
    \partial_{abi}\varphi\,
    \partial_{cdj}\varphi
  \right)_{i,j=1}^n
\]
is positive semidefinite by \cref{lem:2.4}. The matrix
$B\nabla^2\varphi B$ is also positive semidefinite. The trace of their product is
\[
  \left(B\nabla^2\varphi B\right)_{ij}
  \varphi^{ac}\varphi^{bd}
  \partial_{abi}\varphi\,
  \partial_{cdj}\varphi,
\]
so the third such term in \cref{eq:2.10} is nonnegative as well. The boundedness established above gives
\[
  \Tr\!\left(
    B\nabla^2\varphi B\nabla^2\varphi
  \right)
  \in L^\infty(\nu).
\]
Moreover, the nonnegativity of the three terms just verified and
\cref{eq:2.10} give the pointwise inequality
\[
  \mc{L}\!\left[
    \Tr\!\left(
      B\nabla^2\varphi B\nabla^2\varphi
    \right)
  \right]
  +2\Tr\!\left(
    B\nabla^2\varphi B\nabla^2\varphi
  \right)
  \geq0.
\]
Consequently, \cref{lem:a5}, applied directly with $c=2$, yields
\[
  \mc{L}\!\left[
    \Tr\!\left(
      B\nabla^2\varphi B\nabla^2\varphi
    \right)
  \right]
  \in L^1(\nu),
  \qquad
  \E_{Y\sim\nu}
  \mc{L}\!\left[
    \Tr\!\left(
      B\nabla^2\varphi(Y)B\nabla^2\varphi(Y)
    \right)
  \right]
  =0.
\]
Rearranging \cref{eq:2.10} now expresses the sum of the three nonnegative terms as something in $L^1(\nu)$.
The sum is
therefore integrable, and since the terms are nonnegative, each is
integrable separately. We may thus take expectations to obtain
\[
\begin{aligned}
  0
  &=-2\cdot\E_{Y\sim\nu}
  \Tr\!\left(
    B\nabla^2\varphi(Y)B\nabla^2\varphi(Y)
  \right)\\
  &\quad+2\cdot\E_{Y\sim\nu}\!\left[
    \varphi^{ab}(Y)
    \Tr\!\left(
      \bigl(B\partial_a(\nabla^2\varphi)(Y)B\bigr)\cdot
      \partial_b(\nabla^2\varphi)(Y)
    \right)
  \right]\\
  &\quad+2\cdot\E_{Y\sim\nu}
  \Tr\!\left(
    \bigl(B\nabla^2\varphi(Y)B\bigr)\cdot
    \bigl(\nabla^2\varphi(Y)\nabla^2V(\nabla\varphi(Y))\nabla^2\varphi(Y)\bigr)
  \right)\\
  &\quad+2\cdot\E_{Y\sim\nu}\!\left[
    \left(B\nabla^2\varphi(Y)B\right)_{ij}
    \varphi^{ac}(Y)\varphi^{bd}(Y)
    \partial_{abi}\varphi(Y)\,
    \partial_{cdj}\varphi(Y)
  \right].
\end{aligned}
\]
We divide this equality by two and move its negative term to the other
side. We obtain
\begin{equation}\label{eq:2.11}
\begin{aligned}
  &\E_{Y\sim\nu}
  \Tr\!\left(
    B\nabla^2\varphi(Y)B\nabla^2\varphi(Y)
  \right)\\
  &=\E_{Y\sim\nu}\!\left[
    \varphi^{ab}(Y)
    \Tr\!\left(
      \bigl(B\partial_a(\nabla^2\varphi)(Y)B\bigr)\cdot
      \partial_b(\nabla^2\varphi)(Y)
    \right)
  \right]\\
  &\quad+\E_{Y\sim\nu}
  \Tr\!\left(
    \bigl(B\nabla^2\varphi(Y)B\bigr)\cdot
    \bigl(\nabla^2\varphi(Y)\nabla^2V(\nabla\varphi(Y))\nabla^2\varphi(Y)\bigr)
  \right)\\
  &\quad+\E_{Y\sim\nu}\!\left[
    \left(B\nabla^2\varphi(Y)B\right)_{ij}
    \varphi^{ac}(Y)\varphi^{bd}(Y)
    \partial_{abi}\varphi(Y)\,
    \partial_{cdj}\varphi(Y)
  \right].
\end{aligned}
\end{equation}

We next compare the first and third terms on the right-hand side of
\cref{eq:2.11}. This comparison is pointwise, so fix $y\in\R^n$. Under an invertible linear change of variables, we see that
\cref{eq:2.11} is invariant, so we may first normalize $\nabla^2\varphi(y)=\Id$. In these
coordinates, $B$ is still symmetric and positive semidefinite. We may
therefore choose an orthonormal eigenbasis of $B$, which preserves this
normalization but $B$ is diagonal. In this basis we will write
\[
  \nabla^2\varphi(y)=\Id,
  \qquad
  B=\operatorname{diag}(b_1,\ldots,b_n),
  \qquad
  b_1,\ldots,b_n\geq0.
\]
The first term on the right-hand side of \cref{eq:2.11} then becomes
\[
  \varphi^{ab}(y)
  \Tr\!\left(
    \bigl(B\partial_a(\nabla^2\varphi)(y)B\bigr)\cdot
    \partial_b(\nabla^2\varphi)(y)
  \right)
  =\sum_{i,j,k=1}^n b_ib_j
  \bigl(\partial_{ijk}\varphi(y)\bigr)^2,
\]
whereas the third term on the right-hand side of \cref{eq:2.11} becomes
\[
  \left(B\nabla^2\varphi(y)B\right)_{ij}
  \varphi^{ac}(y)\varphi^{bd}(y)
  \partial_{abi}\varphi(y)\,
  \partial_{cdj}\varphi(y)
  =\sum_{i,j,k=1}^n b_i^2
  \bigl(\partial_{ijk}\varphi(y)\bigr)^2.
\]
Since the third derivatives of $\varphi$ are symmetric, interchanging
$i$ and $j$ gives
\[
  \sum_{i,j,k=1}^n b_i^2
  \bigl(\partial_{ijk}\varphi(y)\bigr)^2
  =
  \frac12\sum_{i,j,k=1}^n
  (b_i^2+b_j^2)
  \bigl(\partial_{ijk}\varphi(y)\bigr)^2.
\]
Subtracting the first term from the third therefore gives
\begin{equation}\label{eq:2.12}
\begin{aligned}
  &\left(B\nabla^2\varphi(y)B\right)_{ij}
  \varphi^{ac}(y)\varphi^{bd}(y)
  \partial_{abi}\varphi(y)\,
  \partial_{cdj}\varphi(y)\\
  &\quad-\varphi^{ab}(y)
  \Tr\!\left(
    \bigl(B\partial_a(\nabla^2\varphi)(y)B\bigr)\cdot
    \partial_b(\nabla^2\varphi)(y)
  \right)\\
  &=\frac12\sum_{i,j,k=1}^n
  (b_i^2+b_j^2)
  \bigl(\partial_{ijk}\varphi(y)\bigr)^2
  -\sum_{i,j,k=1}^n
  b_ib_j
  \bigl(\partial_{ijk}\varphi(y)\bigr)^2\\
  &=\frac12\sum_{i,j,k=1}^n
  (b_i^2+b_j^2-2b_ib_j)
  \bigl(\partial_{ijk}\varphi(y)\bigr)^2\\
  &=\frac12\sum_{i,j,k=1}^n
  (b_i-b_j)^2
  \bigl(\partial_{ijk}\varphi(y)\bigr)^2\\
  &\geq0.
\end{aligned}
\end{equation}
It remains to examine the middle term on the right-hand side of
\cref{eq:2.11}. For every $u\in\R^n$,
\[
  \left\langle
    B\nabla^2\varphi(y)Bu,u
  \right\rangle
  =\left\langle
    \nabla^2\varphi(y)Bu,Bu
  \right\rangle
  \geq0.
\]
Thus $B\nabla^2\varphi(y)B$ is positive semidefinite. Similarly,
\[
  \left\langle
    \nabla^2\varphi(y)
    \nabla^2V(\nabla\varphi(y))
    \nabla^2\varphi(y)u,u
  \right\rangle
  =\left\langle
    \nabla^2V(\nabla\varphi(y))
    \nabla^2\varphi(y)u,
    \nabla^2\varphi(y)u
  \right\rangle
  \geq0,
\]
where the final inequality follows from the convexity of $V$. The trace
of the product of these two positive semidefinite matrices is
nonnegative. Indeed, cyclicity of the trace gives
\begin{equation}\label{eq:2.13}
\begin{aligned}
  &\Tr\!\left(
    \bigl(B\nabla^2\varphi(y)B\bigr)\cdot
    \bigl(\nabla^2\varphi(y)\nabla^2V(\nabla\varphi(y))\nabla^2\varphi(y)\bigr)
  \right)\\
  &=\Tr\!\left(
    \bigl(B\nabla^2\varphi(y)B\bigr)^{1/2}
    \nabla^2\varphi(y)
    \nabla^2V(\nabla\varphi(y))
    \nabla^2\varphi(y)
    \bigl(B\nabla^2\varphi(y)B\bigr)^{1/2}
  \right)\\
  &\geq0.
\end{aligned}
\end{equation}

We may now combine the three terms in \cref{eq:2.11}. First,
\cref{eq:2.12} gives
\[
\begin{aligned}
  &\E_{Y\sim\nu}\!\left[
    \left(B\nabla^2\varphi(Y)B\right)_{ij}
    \varphi^{ac}(Y)\varphi^{bd}(Y)
    \partial_{abi}\varphi(Y)\,
    \partial_{cdj}\varphi(Y)
  \right]\\
  &\quad\geq
  \E_{Y\sim\nu}\!\left[
    \varphi^{ab}(Y)
    \Tr\!\left(
      \bigl(B\partial_a(\nabla^2\varphi)(Y)B\bigr)\cdot
      \partial_b(\nabla^2\varphi)(Y)
    \right)
  \right].
\end{aligned}
\]
The term containing $\nabla^2V$ on the right-hand side of
\cref{eq:2.11} is nonnegative by \cref{eq:2.13}.
Consequently,
\begin{equation}\label{eq:2.14}
  \E_{Y\sim\nu}\Tr\!\left(
    B\nabla^2\varphi(Y)B\nabla^2\varphi(Y)
  \right)
  \geq2\cdot\E_{Y\sim\nu}\!\left[
    \varphi^{ab}(Y)
    \Tr\!\left(
      \bigl(B\partial_a(\nabla^2\varphi)(Y)B\bigr)\cdot
      \partial_b(\nabla^2\varphi)(Y)
    \right)
  \right].
\end{equation}

We now use the Brascamp--Lieb inequality
\cite[Theorem~4.1]{BL76}, in the form of \cref{lem:a6}. Since
$B\succeq0$, its
positive semidefinite square root $B^{1/2}$ is well defined. By
\cref{lem:2.3},
\[
  \E_{Y\sim\nu}\nabla^2\varphi(Y)=\Id.
\]
Multiplying this identity on both sides by $B^{1/2}$ gives
\[
  \E_{Y\sim\nu}\!\left[
    B^{1/2}\nabla^2\varphi(Y)B^{1/2}
  \right]
  =B^{1/2}\Id B^{1/2}
  =B.
\]
For each $1\leq i,j\leq n$, set
\[
  F_{ij}(y)
  =
  \left(B^{1/2}\nabla^2\varphi(y)B^{1/2}\right)_{ij}.
\]
Consequently, for every $1\leq i,j\leq n$,
\begin{equation}\label{eq:2.15}
  \E_{Y\sim\nu}F_{ij}(Y)=B_{ij}.
\end{equation}
Again, recalling that $\nabla^2 \varphi$ is smooth and bounded, we see that each
$F_{ij}$ is smooth and bounded. Thus, for every $1\leq i,j\leq n$,
\cref{lem:a6} applied to the function $F_{ij}$ gives
\[
\begin{aligned}
  \Var_{Y\sim\nu}\!\left(F_{ij}(Y)\right)
  &=
  \E_{Y\sim\nu}\!\left[
    \bigl(F_{ij}(Y)-B_{ij}\bigr)^2
  \right]\\
  &\leq
  \E_{Y\sim\nu}\!\left[
    \varphi^{ab}(Y)
    (\partial_aF_{ij})(Y)
    (\partial_bF_{ij})(Y)
  \right].
\end{aligned}
\]
Summing these inequalities over $i$ and $j$ and using linearity of
expectation gives
\begin{equation}\label{eq:2.16}
\begin{aligned}
  \sum_{i,j=1}^n
  \Var_{Y\sim\nu}\!\left(F_{ij}(Y)\right)
  &\leq
  \sum_{i,j=1}^n
  \E_{Y\sim\nu}\!\left[
    \varphi^{ab}(Y)
    (\partial_aF_{ij})(Y)
    (\partial_bF_{ij})(Y)
  \right]\\
  &=
  \E_{Y\sim\nu}\!\left[
    \varphi^{ab}(Y)
    \sum_{i,j=1}^n
    (\partial_aF_{ij})(Y)
    (\partial_bF_{ij})(Y)
  \right].
\end{aligned}
\end{equation}
We compute the two sides of \cref{eq:2.16} separately, beginning with
the left-hand side.
By \cref{eq:2.15},
\[
  \sum_{i,j=1}^n
  \Var_{Y\sim\nu}\!\left(F_{ij}(Y)\right)
  =\E_{Y\sim\nu}
  \left\|
    B^{1/2}\nabla^2\varphi(Y)B^{1/2}-B
  \right\|_{\mathrm{HS}}^2.
\]
Since the matrix inside the Hilbert--Schmidt norm is symmetric, its
squared Hilbert--Schmidt norm is the trace of its square. Hence
\[
\begin{aligned}
  &\E_{Y\sim\nu}
  \left\|
    B^{1/2}\nabla^2\varphi(Y)B^{1/2}-B
  \right\|_{\mathrm{HS}}^2\\
  &=\E_{Y\sim\nu}
  \Tr\!\left(
    \bigl(B^{1/2}\nabla^2\varphi(Y)B^{1/2}\bigr)^2
  \right)\\
  &\quad-2\Tr\!\left(
    \E_{Y\sim\nu}\!\left[
      B^{1/2}\nabla^2\varphi(Y)B^{1/2}
    \right]B
  \right)\\
  &\quad+\Tr(B^2).
\end{aligned}
\]
Using \cref{eq:2.15} once more, the last two terms become $-2\Tr(B^2)+\Tr(B^2)=-\Tr(B^2).$ Cyclicity of the trace gives
\[
  \Tr\!\left(
    \bigl(B^{1/2}\nabla^2\varphi(Y)B^{1/2}\bigr)^2
  \right)
  =\Tr\!\left(
    B\nabla^2\varphi(Y)B\nabla^2\varphi(Y)
  \right).
\]
We conclude that the sum of the variances is
\[
  \sum_{i,j=1}^n
  \Var_{Y\sim\nu}\!\left(F_{ij}(Y)\right)
  =\E_{Y\sim\nu}
  \Tr\!\left(
    B\nabla^2\varphi(Y)B\nabla^2\varphi(Y)
  \right)
  -\Tr(B^2).
\]

We next compute the right-hand side of \cref{eq:2.16}. Since
$B^{1/2}$ is constant,
\[
  \partial_a\!\left(
    B^{1/2}\nabla^2\varphi(y)B^{1/2}
  \right)_{ij}
  =\left(
    B^{1/2}\partial_a(\nabla^2\varphi)(y)B^{1/2}
  \right)_{ij}.
\]
It follows from the definition of the Hilbert--Schmidt inner product
that
\[
  \sum_{i,j=1}^n
  \partial_a\!\left(
    B^{1/2}\nabla^2\varphi(y)B^{1/2}
  \right)_{ij}
  \partial_b\!\left(
    B^{1/2}\nabla^2\varphi(y)B^{1/2}
  \right)_{ij}
  =\Tr\!\left(
    B^{1/2}\partial_a(\nabla^2\varphi)(y)B
    \partial_b(\nabla^2\varphi)(y)B^{1/2}
  \right).
\]
Cyclicity of the trace then gives
\[
  \sum_{i,j=1}^n
  \partial_a\!\left(
    B^{1/2}\nabla^2\varphi(y)B^{1/2}
  \right)_{ij}
  \partial_b\!\left(
    B^{1/2}\nabla^2\varphi(y)B^{1/2}
  \right)_{ij}
  =\Tr\!\left(
    \bigl(B\partial_a(\nabla^2\varphi)(y)B\bigr)\cdot
    \partial_b(\nabla^2\varphi)(y)
  \right).
\]
Substituting these computations into \cref{eq:2.16} gives
\[
  \E_{Y\sim\nu}\Tr\!\left(
    B\nabla^2\varphi(Y)B\nabla^2\varphi(Y)
  \right)
  -\Tr(B^2)
  \leq\E_{Y\sim\nu}\!\left[
    \varphi^{ab}(Y)
    \Tr\!\left(
      \bigl(B\partial_a(\nabla^2\varphi)(Y)B\bigr)\cdot
      \partial_b(\nabla^2\varphi)(Y)
    \right)
  \right].
\]
By \cref{eq:2.14}, the right-hand side is at most
\[
  \frac12\cdot\E_{Y\sim\nu}
  \Tr\!\left(
    B\nabla^2\varphi(Y)B\nabla^2\varphi(Y)
  \right).
\]
Consequently, rearranging gives us
\begin{equation}\label{eq:2.17}
2\cdot \E_{Y\sim\nu}\Tr\!\left(
    B\nabla^2\varphi(Y)B\nabla^2\varphi(Y)
  \right)
  -2\Tr(B^2)
  \leq \E_{Y\sim\nu}\Tr\!\left(
    B\nabla^2\varphi(Y)B\nabla^2\varphi(Y)
  \right),
\end{equation}
which gives us the desired result whenever $B\succeq0$. It remains to remove the assumption that $B$ is positive semidefinite.
Let $B$ now be an arbitrary symmetric matrix and write
\[
  \abs{B}=(B^2)^{1/2}.
\]
Choose an orthonormal eigenbasis of $B$. In this basis,
\[
  B=\operatorname{diag}(b_1,\ldots,b_n)
\]
and
\[
  \abs{B}
  =\operatorname{diag}
  \bigl(\abs{b_1},\ldots,\abs{b_n}\bigr).
\]
At every $y\in\R^n$, let $\partial_{ij}\varphi(y)$ denote the components
of $\nabla^2\varphi(y)$ in this fixed orthonormal eigenbasis. Expanding
the trace gives
\[
  \Tr\!\left(
    B\nabla^2\varphi(y)B\nabla^2\varphi(y)
  \right)
  =\sum_{i,j=1}^nb_ib_j
  \bigl(\partial_{ij}\varphi(y)\bigr)^2.
\]
For every pair $i,j$, we simply just use $b_ib_j \leq\abs{b_i}\abs{b_j}$ and so therefore,
\[
  \Tr\!\left(
    B\nabla^2\varphi(y)B\nabla^2\varphi(y)
  \right)
  \leq\sum_{i,j=1}^n
    \abs{b_i}\abs{b_j}
    \bigl(\partial_{ij}\varphi(y)\bigr)^2
  =\Tr\!\left(
    \abs{B}\nabla^2\varphi(y)
    \abs{B}\nabla^2\varphi(y)
  \right).
\]
Taking expectations gives
\[
  \E_{Y\sim\nu}\Tr\!\left(
    B\nabla^2\varphi(Y)B\nabla^2\varphi(Y)
  \right)
  \leq\E_{Y\sim\nu}\Tr\!\left(
    \abs{B}\nabla^2\varphi(Y)
    \abs{B}\nabla^2\varphi(Y)
  \right).
\]
Since $\abs{B}\succeq0$, we may apply \cref{eq:2.17} with $\abs{B}$ in
place of $B$. We obtain
\[
  \E_{Y\sim\nu}
  \Tr\!\left(
    B\nabla^2\varphi(Y)B\nabla^2\varphi(Y)
  \right)
  \leq2\Tr(\abs{B}^2)
  =2\Tr(B^2). \qedhere
\]
\end{proof}

\subsection{Stein kernels and the \texorpdfstring{$H^{-1}$}{H-1} argument}

We now convert \cref{thm:2.5} into the desired estimate present in \cref{thm:1.2}. The object connecting these two estimates is a Stein
kernel.

\begin{definition}
Let $W\sim\lambda$ be a centered random vector in $\R^n$. A measurable
matrix valued function $\tau_\lambda:\R^n\to\R^{n\times n}$, understood up to
$\lambda$-almost-everywhere equality and whose entries are locally integrable
with respect to $\lambda$, is called a
\emph{Stein kernel} for $\lambda$ if, for every smooth compactly
supported function $f:\R^n\to\R$ and every $1\leq i\leq n$,
\begin{equation}\label{eq:2.18}
  \E_{W\sim\lambda}\!\left[W_i f(W)\right]
  =
  \E_{W\sim\lambda}\!\left[
    \sum_{j=1}^n
    \bigl(\tau_\lambda(W)\bigr)_{ij}\partial_jf(W)
  \right].
\end{equation}
If $\tau_\lambda(u)$ is symmetric for $\lambda$-almost every $u$, we call it a
symmetric Stein kernel.
\end{definition}

For background on Stein kernels and their connections with spectral gaps,
Gaussian approximation, and transport inequalities, see
\cite{LNP15,CFP19,Fathi2021}.
Stability results and transport constructions involving Stein kernels
appear in \cite{FM22,MS24}. For related
applications to Gaussian approximation, see
\cite{Mikulincer2022CLT}.
The moment map supplies the Stein kernel that we will use.

\begin{lemma}[Fathi {\cite[Theorem~2.3]{Fathi2019}}]\label{lem:2.7}
Under the regularity assumptions above for $\mu$,
for $\mu$-almost every $x$, define
\[
  \tau_\mu(x)
  =
  \nabla^2\varphi\!\left((\nabla\varphi)^{-1}(x)\right).
\]
$\tau_\mu$ is a symmetric Stein kernel for $\mu$ and is positive definite
$\mu$-almost everywhere. In the
coupling $Y\sim\nu$ and $X=\nabla\varphi(Y)\sim\mu$, one has
\[
  \tau_\mu(X)=\nabla^2\varphi(Y).
\]
\end{lemma}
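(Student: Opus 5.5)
Recall that $\nu$ is the probability measure $e^{-\varphi(y)}\dd y$, with $(\nabla\varphi)_{\#}\nu=\mu$. The plan is to verify the Stein identity \cref{eq:2.18} directly by pulling the expectation back to $\nu$ and integrating by parts there with \cref{eq:2.1}. The other two claims (symmetry and positive definiteness of $\tau_\mu$, and the coupling identity) are immediate from \cref{lem:2.2}.

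First, $\nabla\varphi:\R^n\to K$ is a diffeomorphism by \cref{lem:2.2}, so $(\nabla\varphi)^{-1}$ is well defined and smooth on $K$. Since $\mu$ is supported on $\overline K$ and $\partial K$ is $\mu$-null, $\tau_\mu$ is defined $\mu$-almost everywhere and is smooth on $K$. It is symmetric because Hessians are symmetric, and positive definite because $\nabla^2\varphi\succ0$ everywhere. If $Y\sim\nu$ and $X=\nabla\varphi(Y)$, then $(\nabla\varphi)^{-1}(X)=Y$, so
\[
  \tau_\mu(X)=\nabla^2\varphi(Y).
\]
Local integrability also holds: $\nabla^2\varphi$ is bounded, so $\tau_\mu$ is bounded.

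For the Stein identity, fix $f\in C_c^\infty(\R^n)$ and an index $i$. By the pushforward relation,
\[
  \E_{X\sim\mu}[X_if(X)]=\E_{Y\sim\nu}\bigl[\partial_i\varphi(Y)\,f(\nabla\varphi(Y))\bigr].
\]
Set $h=f\circ\nabla\varphi$. This function is smooth and bounded, and the chain rule gives $\partial_ih=(\partial_jf\circ\nabla\varphi)\,\partial_{ij}\varphi$, which is also bounded since $\nabla f$ and $\nabla^2\varphi$ are bounded. Apply \cref{eq:2.1} in coordinate $i$ with this $h$ in place of $f$ and with $g\equiv1$. All of $h$, $\partial_ih$ and $h\,\partial_i\varphi$ are bounded, hence in $L^1(\nu)$, so we get
\[
  \E_{Y\sim\nu}[h\,\partial_i\varphi]=\E_{Y\sim\nu}[\partial_ih]=\E_{Y\sim\nu}\Bigl[\sum_j\partial_{ij}\varphi(Y)\,\partial_jf(\nabla\varphi(Y))\Bigr].
\]
Rewriting the right-hand side through the coupling turns it into $\E_{X\sim\mu}\bigl[\sum_j\tau_\mu(X)_{ij}\partial_jf(X)\bigr]$, which is exactly \cref{eq:2.18}.

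The only delicate point is the integrability required to invoke \cref{eq:2.1}. Even though $f$ is compactly supported, $h=f\circ\nabla\varphi$ need not be: $\nabla\varphi$ maps onto the bounded set $K$, so $h$ may be nonzero on an unbounded region. This is handled by the boundedness of $\nabla\varphi$ and $\nabla^2\varphi$ noted earlier, which makes every integrand bounded and lets \cref{lem:a3} apply.
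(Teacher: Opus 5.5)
Your proof is correct. The symmetry, positive-definiteness and coupling parts coincide with the paper's argument. The difference is in the Stein identity itself. The paper computes nothing there: after recalling from \cref{lem:2.2} that $\nabla\varphi:\R^n\to K$ is a diffeomorphism, it simply imports the Stein property from Fathi's theorem.

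You instead verify \cref{eq:2.18} directly. You pull $\E_{X\sim\mu}[X_if(X)]$ back to $\E_{Y\sim\nu}[\partial_i\varphi(Y)\,f(\nabla\varphi(Y))]$ and integrate by parts via \cref{eq:2.1} with $h=f\circ\nabla\varphi$ and $g\equiv1$. This is the same integration by parts the paper uses to prove \cref{lem:2.3}, which is just your identity tested on a coordinate function.

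Your route is self-contained within the paper's regularized setting. It needs only \cref{lem:2.2}, the boundedness of $\nabla\varphi$ and $\nabla^2\varphi$, and \cref{lem:a3}. It also makes explicit a point the paper leaves implicit: $f\circ\nabla\varphi$ is generally not compactly supported, so one needs \cref{lem:a3} rather than naive integration by parts. The paper's citation is shorter and connects the kernel to Fathi's general moment-map framework, but that generality is not needed under the present assumptions.
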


\begin{proof}
By \cref{lem:2.2}, the moment map $\varphi$ is smooth and finite
on $\R^n$, and $\nabla\varphi:\R^n\to K$ is a diffeomorphism. Thus, in our notation, we see that
\[
  \tau_\mu(x)
  =
  \nabla^2\varphi\!\left((\nabla\varphi)^{-1}(x)\right),
  \qquad x\in K,
\]
is a Stein kernel for $\mu$. By \cref{lem:2.2}, this matrix is symmetric
and positive definite for every $x\in K$, hence $\mu$-almost everywhere.
Finally, if $X=\nabla\varphi(Y)$, then
\[
  \tau_\mu(X)
  =
  \nabla^2\varphi\!\left((\nabla\varphi)^{-1}
    (\nabla\varphi(Y))\right)
  =
  \nabla^2\varphi(Y).\qedhere
\]
\end{proof}

Following Barthe and Klartag \cite[Equation~3]{BK20}, for a
centered function $f\in L^2(\lambda)$ define
\begin{equation}\label{eq:2.19}
  \norm{f}_{H^{-1}(\lambda)}
  =
  \sup\left\{
    \E_{W\sim\lambda}\!\left[f(W)g(W)\right]:
    \begin{array}{l}
      g\in L^2(\lambda)\text{ is locally Lipschitz},\\
      \E_{W\sim\lambda}\abs{\nabla g(W)}^2\leq1
    \end{array}
  \right\}.
\end{equation}
We will use the following inequality of Barthe and Klartag
\cite[Proposition~10]{BK20}.

\begin{proposition}\label{prop:2.8}
Let $W\sim\lambda$ be log-concave, and let $f:\R^n\to\R$ be locally
Lipschitz. Suppose that $f,\partial_i f\in L^2(\lambda)$ and
\[
  \E_{W\sim\lambda}\partial_i f(W)=0,
  \qquad 1\leq i\leq n.
\]
Then
\begin{equation}\label{eq:2.20}
  \Var_{W\sim\lambda}\bigl(f(W)\bigr)
  \leq
  \sum_{i=1}^n
  \norm{\partial_i f}_{H^{-1}(\lambda)}^2.
\end{equation}
\end{proposition}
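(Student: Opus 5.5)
The plan is the standard $L^2$ (Hörmander/Bochner) duality argument for the Langevin generator of $\lambda$, combined with Cauchy--Schwarz.

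\emph{Reduction.} By approximation I first reduce to the case where $\lambda$ has density $e^{-W}$ with $W$ smooth and convex on all of $\R^n$ (for instance, $W$ strictly convex with quadratic growth), and where $f$ is smooth with bounded derivatives. In general, $\lambda$ is approximated by such measures, e.g.\ by convolving with a small Gaussian and adding a tiny quadratic term to the potential. Both sides of \cref{eq:2.20} must then be checked to behave well under the limit. For the left side this is routine. For the $H^{-1}$ norms one needs lower semicontinuity of the supremum in the right direction; this is where some care is needed.

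\emph{Poisson equation and Bochner.} Let $L g = \Delta g - \inner{\nabla W}{\nabla g}$. This operator is symmetric on $L^2(\lambda)$ and satisfies $\E[(Lg)h] = -\E\inner{\nabla g}{\nabla h}$. Since log-concave measures have a spectral gap, I solve $Lu = f - \E f$ with $u$ smooth. Integrating Bochner's formula gives
\[
\E (Lu)^2 = \E\norm{\nabla^2 u}_{\HS}^2 + \E\inner{\nabla^2 W\,\nabla u}{\nabla u} \ge \E\norm{\nabla^2 u}_{\HS}^2 ,
\]
where the inequality uses convexity of $W$ and is the only place log-concavity enters.

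\emph{Main chain.} Then
\[
\Var f = \E[(f-\E f)\,Lu] = -\sum_{i=1}^n \E[\partial_i f\,\partial_i u].
\]
Since each $\partial_i f$ is centered, the definition \cref{eq:2.19} of the $H^{-1}$ norm applied with $g=\pm\partial_i u$ (after rescaling) gives
\[
\abs{\E[\partial_i f\,\partial_i u]} \le \norm{\partial_i f}_{H^{-1}(\lambda)}\,\bigl(\E\abs{\nabla\partial_i u}^2\bigr)^{1/2}.
\]
Summing over $i$ and applying Cauchy--Schwarz in $i$ yields
\[
\Var f \le \Bigl(\sum_i \norm{\partial_i f}^2_{H^{-1}}\Bigr)^{1/2}\bigl(\E\norm{\nabla^2u}_{\HS}^2\bigr)^{1/2} \le \Bigl(\sum_i \norm{\partial_i f}^2_{H^{-1}}\Bigr)^{1/2}(\Var f)^{1/2},
\]
because $\E(Lu)^2 = \Var f$. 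Dividing by $(\Var f)^{1/2}$ proves \cref{eq:2.20}.

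\emph{Main obstacle.} I expect the difficulty to be regularity rather than the algebra. Two things must be justified: that the Poisson solution $u$ is smooth with $\nabla u, \nabla^2 u\in L^2(\lambda)$, and that the integration by parts in Bochner's formula is valid without boundary terms.

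To sidestep solving the equation, I would use a variational form instead. For any $u\in C_c^\infty$,
\[
2\E[(f-\E f)Lu] - \E(Lu)^2 \le -2\sum_i \E[\partial_i f\,\partial_i u] - \E\norm{\nabla^2 u}_{\HS}^2 .
\]
The left side approaches $\Var f$ once $\{Lu : u\in C_c^\infty\}$ is dense in the centered part of $L^2(\lambda)$, which holds by essential self-adjointness of $L$ for smooth $W$. The right side is at most $\sum_i\norm{\partial_if}_{H^{-1}}^2$ after optimizing $2ab - b^2\le a^2$ termwise in $i$. This gives the result without ever solving the Poisson equation, leaving only the final approximation of a general log-concave $\lambda$.
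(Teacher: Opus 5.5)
The paper does not prove this proposition; it quotes it from Barthe--Klartag \cite[Proposition~10]{BK20}. Your core argument is the standard $L^2$-method proof of this $H^{-1}$ inequality. It combines the integrated Bochner bound $\E(Lu)^2\ge\E\norm{\nabla^2u}_{\HS}^2$ for a convex potential, the duality $\Var f=\sup_u\{2\E[(f-\E f)Lu]-\E(Lu)^2\}$, and the termwise bound $2ab-b^2\le a^2$. Your variational version is rigorous when $\lambda$ has a smooth convex potential on all of $\R^n$. In that case $L$ is essentially self-adjoint on $C_c^\infty(\R^n)$ and its closure has only constants in its kernel, so $\{Lu:u\in C_c^\infty\}$ is dense in the centered part of $L^2(\lambda)$.

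The genuine gap is the reduction from a general log-concave $\lambda$, which you leave open and which is not routine. To pass $\Var_{\lambda_\varepsilon}f\le\sum_i\norm{\partial_if}_{H^{-1}(\lambda_\varepsilon)}^2$ to the limit you need $\limsup_{\varepsilon\to0}\norm{\partial_if}_{H^{-1}(\lambda_\varepsilon)}\le\norm{\partial_if}_{H^{-1}(\lambda)}$, i.e.\ \emph{upper} semicontinuity of a supremum. The lower semicontinuity that a supremum gives for free points the wrong way. More basically, an approximation such as $\lambda*\gamma_\varepsilon$ (with $\gamma_\varepsilon$ the centered Gaussian of covariance $\varepsilon\Id$) destroys the hypothesis $\E_{\lambda_\varepsilon}\partial_if=0$. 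For a non-centered function the supremum in \eqref{eq:2.19} is $+\infty$ (test against large constants), so the approximating inequality is vacuous. Gaussian convolution need not even keep $f\in L^2(\lambda_\varepsilon)$, since $f$ is unconstrained off $\operatorname{supp}\lambda$. This is not a corner case here: the paper applies the proposition to $\eta=(\abs{M}^{1/2})_{\#}\mu$, whose density jumps at the boundary of the bounded set $\abs{M}^{1/2}K$. So the smooth full-support case you prove does not cover the application.

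One way to close the gap for bounded support is the following.
\begin{enumerate}
\item Take a smooth convex $W_\varepsilon$ on $\R^n$ lying below the potential $W$ of $\lambda$, e.g.\ a mollified Moreau envelope of $W$ minus a small constant. Then $\lambda_\varepsilon\propto e^{-W_\varepsilon}$ converges to $\lambda$ in total variation, and $d\lambda/d\lambda_\varepsilon\le Z_\varepsilon/Z\to1$ on $\operatorname{supp}\lambda$, where $Z,Z_\varepsilon$ are the normalizing constants.
\item Cut $f$ off outside a neighborhood of $\operatorname{supp}\lambda$.
\item Restore centering by replacing $f$ with $f-\inner{a_\varepsilon}{x}$, where $a_\varepsilon=\E_{\lambda_\varepsilon}\nabla f\to0$.
\item Take a test function $g$ with $\E_{\lambda_\varepsilon}\abs{\nabla g}^2\le1$ and subtract $\E_\lambda g$. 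Split $\E_{\lambda_\varepsilon}[(\partial_if-a_{\varepsilon,i})g]$ into the part on $\operatorname{supp}\lambda$ and the rest.
\item Bound the main term by $\norm{\partial_if}_{H^{-1}(\lambda)}(\E_\lambda\abs{\nabla g}^2)^{1/2}\le(Z_\varepsilon/Z)^{1/2}\norm{\partial_if}_{H^{-1}(\lambda)}$.
\item Control the errors using the uniformly bounded Poincar\'e constants of $\lambda_\varepsilon$. These errors are the mass of $\lambda_\varepsilon$ off the support, $\norm{(d\lambda/d\lambda_\varepsilon-1)\mathbf{1}_{\operatorname{supp}\lambda}}_{L^2(\lambda_\varepsilon)}$, and $\abs{a_\varepsilon}$.
\end{enumerate}
This yields exactly the upper semicontinuity your limit argument needs.
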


The measure $\lambda$ in \cref{prop:2.8} need not be isotropic and this is what allows us to use a trick for the proof of \cref{thm:1.2}, which will appear shortly. Before,
we need just a quick lemma about how a symmetric Stein kernel controls the $H^{-1}$ norm as defined in \cref{eq:2.19}.

\begin{lemma}\label{lem:2.9}
Let $W\sim\lambda$ be a centered log-concave random vector whose law is
full-dimensional and has finite second moments, and suppose that
$\tau_\lambda$ is a symmetric Stein kernel satisfying
\[
  \E_{W\sim\lambda}\norm{\tau_\lambda(W)}_{\HS}^2<\infty.
\]
Then, for every $v\in\R^n$,
\[
  \norm{x\mapsto\inner{x}{v}}_{H^{-1}(\lambda)}^2
  \leq
  \E_{W\sim\lambda}\abs{\tau_\lambda(W)v}^2.
\]
\end{lemma}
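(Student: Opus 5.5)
The plan is to unfold the definition \cref{eq:2.19} of the $H^{-1}$ norm, use the Stein identity \cref{eq:2.18} to move the linear function onto the gradient of the test function, and then apply Cauchy--Schwarz.

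Fix $v\in\R^n$ and a locally Lipschitz $g\in L^2(\lambda)$ with $\E_{W\sim\lambda}\abs{\nabla g(W)}^2\le1$. Multiplying \cref{eq:2.18} by $v_i$ and summing over $i$ gives
\[
  \E_{W\sim\lambda}\!\left[\inner{W}{v}g(W)\right]
  =\E_{W\sim\lambda}\!\left[\inner{\tau_\lambda(W)^T v}{\nabla g(W)}\right]
  =\E_{W\sim\lambda}\!\left[\inner{\tau_\lambda(W) v}{\nabla g(W)}\right].
\]
The last equality uses the symmetry of $\tau_\lambda$. Cauchy--Schwarz in $L^2(\lambda)$ then bounds the right-hand side by
\[
  \bigl(\E\abs{\tau_\lambda(W)v}^2\bigr)^{1/2}\bigl(\E\abs{\nabla g(W)}^2\bigr)^{1/2}
  \le\bigl(\E\abs{\tau_\lambda(W)v}^2\bigr)^{1/2}.
\]
Taking the supremum over admissible $g$ and squaring gives the claim. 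The function $x\mapsto\inner{x}{v}$ is centered because $\lambda$ is centered, and it lies in $L^2(\lambda)$ because $\lambda$ has finite second moments, so its $H^{-1}$ norm is defined.

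The main obstacle is justifying \cref{eq:2.18} for $g$. The Stein identity is only assumed for smooth compactly supported test functions, while $g$ is merely locally Lipschitz with $g\in L^2(\lambda)$ and $\nabla g\in L^2(\lambda)$. I would approximate in two steps.

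\emph{Truncation.} Let $\chi_R(x)=\chi(x/R)$ for a smooth cutoff $\chi$ that equals $1$ on the unit ball and is supported in the ball of radius $2$. Set $g_R=g\chi_R$, which is Lipschitz with compact support. Then $\nabla g_R=\chi_R\nabla g+g\nabla\chi_R$, and $\abs{\nabla\chi_R}\le C/R$.

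\emph{Mollification.} Mollify $g_R$ by $g_R*\eta_\varepsilon$. These functions are smooth and compactly supported, so \cref{eq:2.18} applies to them. As $\varepsilon\to0$ they converge to $g_R$ uniformly, and their gradients converge to $\nabla g_R$ almost everywhere with a uniform bound. Since $\lambda$ is full-dimensional log-concave, it is absolutely continuous, so Lebesgue-a.e.\ convergence is $\lambda$-a.e.\ convergence. Because $\abs{x}$ and the entries of $\tau_\lambda$ are in $L^1(\lambda)$ (the latter since $\E\norm{\tau_\lambda}_{\HS}^2<\infty$), dominated convergence yields \cref{eq:2.18} for $g_R$.

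\emph{Letting $R\to\infty$.} On the left, $\E[\inner{W}{v}g\chi_R]\to\E[\inner{W}{v}g]$ by dominated convergence, since $\inner{W}{v}g\in L^1(\lambda)$ as a product of two $L^2$ functions. On the right, the term $\E[\inner{\tau_\lambda v}{\chi_R\nabla g}]$ converges by dominated convergence, because $\abs{\tau_\lambda v}\abs{\nabla g}\in L^1(\lambda)$ by Cauchy--Schwarz. The error term $\E[g\inner{\tau_\lambda v}{\nabla\chi_R}]$ is at most $(C/R)\,\norm{g}_{L^2(\lambda)}\norm{\tau_\lambda v}_{L^2(\lambda)}$, which tends to $0$. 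Hence \cref{eq:2.18} holds for $g$ itself, which completes the argument.
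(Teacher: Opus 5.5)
Your proof is correct and is essentially the paper's argument: sum the Stein identity against $v$, use the symmetry of $\tau_\lambda$, apply Cauchy--Schwarz, and take the supremum. The only difference is how the identity is extended from $C_c^\infty$ test functions to locally Lipschitz ones. The paper delegates this to \cref{lem:a7}, which cites the $H^1(\lambda)$-density result \cite[Proposition~27]{BK20}. Your truncation-and-mollification argument does it directly, and it needs only the absolute continuity of $\lambda$ together with the $L^2(\lambda)$ bounds on $\inner{W}{v}$, $g$, $\nabla g$, and $\tau_\lambda v$.
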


\begin{proof}
Let $g$ be a smooth compactly supported function in the test class from
\cref{eq:2.19}.
Summing the Stein identities in \cref{eq:2.18} against the coordinates
of $v$ and using the symmetry of $\tau_\lambda$ gives
\[
  \E_{W\sim\lambda}\!\left[\inner{W}{v}g(W)\right]
  =
  \E_{W\sim\lambda}
  \inner{\tau_\lambda(W)v}{\nabla g(W)}.
\]
By Cauchy--Schwarz,
\[
  \left|
    \E_{W\sim\lambda}\!\left[\inner{W}{v}g(W)\right]
  \right|
  \leq
  \left(
    \E_{W\sim\lambda}\abs{\tau_\lambda(W)v}^2
  \right)^{1/2}
  \left(
    \E_{W\sim\lambda}\abs{\nabla g(W)}^2
  \right)^{1/2}.
\]
By \cref{lem:a7}, the same estimate holds for every test
function in \cref{eq:2.19}. Taking the supremum there and then squaring
proves the lemma.
\end{proof}

Before beginning the proof, let us explain the small trick \cref{eq:2.21} that we will apply. If we apply the $H^{-1}$ inequality directly to
$x\mapsto\inner{Mx}{x}$, the Stein-kernel estimate produces
$\E_{X\sim\mu}\norm{\tau_\mu(X)M}_{\HS}^2$. This is not the matrix
expression controlled by \cref{thm:2.5}, since $M$ and
$\tau_\mu(X)$ need not commute. We therefore absorb $\abs{M}$ into the
random vector. The matrix remaining in the quadratic form is
$\operatorname{sgn}(M)$, which is orthogonal, while the squared
Hilbert--Schmidt norm of the transported Stein kernel is exactly the
quantity in \cref{thm:2.5} with $B=\abs{M}$.

\begin{proof}[Proof of \cref{thm:1.2}]
Fix a symmetric matrix $M$. By \cref{lem:a2}, it
suffices to suppose that $M$ is invertible. Recalling that we have
\[
  M=\operatorname{sgn}(M)\cdot\abs{M}
  =\abs{M}\cdot\operatorname{sgn}(M),
  \qquad
  \operatorname{sgn}(M)^T=\operatorname{sgn}(M),
  \qquad
  \operatorname{sgn}(M)^2=\Id,
\]
we thus see that $\abs{M}$ is positive definite and $\operatorname{sgn}(M)$ is
orthogonal. Set
\[
  Z=\abs{M}^{1/2}X,
  \qquad
  \eta=(\abs{M}^{1/2})_{\#}\mu.
\]
The measure $\eta$ is centered and log-concave, although it is generally
not isotropic. Its covariance and
the transformed quadratic form are
\[
  \Cov_{Z\sim\eta}(Z)=\abs{M},
  \qquad
  \inner{MX}{X}=\inner{\operatorname{sgn}(M)Z}{Z}.
\]
For $f\in C_c^\infty(\R^n)$ and $1\leq i\leq n$, the Stein identity
in \cref{eq:2.18} applied to $f\circ\abs{M}^{1/2}$, followed by the chain rule, gives
\[
\begin{aligned}
  \E[Z_i f(Z)]
  &=\sum_{k=1}^n(\abs{M}^{1/2})_{ik}
      \E[X_k f(\abs{M}^{1/2}X)]\\
  &=\E\sum_{k=1}^n\sum_{\ell=1}^n
      (\abs{M}^{1/2})_{ik}(\tau_\mu(X))_{k\ell}
      \partial_\ell(f\circ\abs{M}^{1/2})(X)\\
  &=\E\sum_{j=1}^n
      (\abs{M}^{1/2}\tau_\mu(X)\abs{M}^{1/2})_{ij}\partial_jf(Z).
\end{aligned}
\]
Thus
$\tau_\eta(z)=\abs{M}^{1/2}\tau_\mu(\abs{M}^{-1/2}z)\abs{M}^{1/2}$
is a symmetric Stein kernel
for $\eta$. In the coupling $X=\nabla\varphi(Y)$, where $Y\sim\nu$,
\cref{lem:2.7} therefore gives
\[
  \tau_\eta(Z)=\abs{M}^{1/2}\tau_\mu(X)\abs{M}^{1/2}
  =\abs{M}^{1/2}\nabla^2\varphi(Y)\abs{M}^{1/2}.
\]
The two middle $\abs{M}^{1/2}$ factors
multiply to $\abs{M}$, and cyclicity of the trace moves the last
$\abs{M}^{1/2}$ factor next to the first, producing a second copy of
$\abs{M}$. Thus
\begin{equation}\label{eq:2.21}
\begin{aligned}
  \norm{\tau_\eta(Z)}_{\HS}^2
  &=\Tr\!\left(
    \bigl(\abs{M}^{1/2}\nabla^2\varphi(Y)\abs{M}^{1/2}\bigr)^2
  \right)\\
  &=\Tr\!\left(
    \bigl(\abs{M}\nabla^2\varphi(Y)\abs{M}\bigr)
    \nabla^2\varphi(Y)
  \right).
\end{aligned}
\end{equation}
The final expression in \cref{eq:2.21} is exactly the one controlled by \cref{thm:2.5} with
$B=\abs{M}$. Consequently,
\[
\begin{aligned}
  \E_{Z\sim\eta}\norm{\tau_\eta(Z)}_{\HS}^2
  &\leq
  2\Tr(\abs{M}^2)\\
  &=
  2\Tr(M^2).
\end{aligned}
\]
In particular, \cref{lem:2.9} is satisfied. Define the centered quadratic function
\[
  F(z)=\inner{\operatorname{sgn}(M)z}{z}-\Tr(M).
\]
Since $X\sim\mu$ is isotropic, we see that $\E_{Z\sim\eta}F(Z) =
  \Tr\!\left(\operatorname{sgn}(M)\cdot\abs{M}\right)-\Tr(M)
  =0.$
Moreover,
\[
  \nabla F(z)=2\operatorname{sgn}(M)z,
  \qquad
  \E_{Z\sim\eta}\nabla F(Z)=0.
\]
Under the regularity assumptions we made above, $X$ is supported in the bounded set $K$. Therefore $Z$ is supported in
the bounded set $\abs{M}^{1/2}K$. It follows that the quadratic function
$F$ and all of its first derivatives belong to $L^2(\eta)$. Together with the two
centering identities above, this verifies all the hypotheses of
\cref{prop:2.8}. Since
\[
  \partial_iF(z)=2\inner{z}{\operatorname{sgn}(M)e_i},
\]
applying \cref{eq:2.20}, then \cref{lem:2.9}, and using the
orthogonality of $\operatorname{sgn}(M)$, we obtain
\[
\begin{aligned}
  \Var_{X\sim\mu}\bigl(\inner{MX}{X}\bigr)
  &=
  \Var_{Z\sim\eta}\bigl(F(Z)\bigr)\\
  &\leq
  4\sum_{i=1}^n
  \norm{z\mapsto\inner{z}{\operatorname{sgn}(M)e_i}}_{H^{-1}(\eta)}^2\\
  &\leq
  4\cdot\E_{Z\sim\eta}
  \sum_{i=1}^n
  \abs{\tau_\eta(Z)\cdot\operatorname{sgn}(M)e_i}^2\\
  &=
  4\cdot\E_{Z\sim\eta}
  \norm{\tau_\eta(Z)\cdot\operatorname{sgn}(M)}_{\HS}^2\\
  &=
  4\cdot\E_{Z\sim\eta}
  \norm{\tau_\eta(Z)}_{\HS}^2.
\end{aligned}
\]
We have proved that
\begin{equation}\label{eq:2.22}
  \Var_{X\sim\mu}\bigl(\inner{MX}{X}\bigr)
  \leq
  8\Tr(M^2) = 2 \cdot \E_{X\sim\mu}\abs{\nabla\inner{MX}{X}}^2
\end{equation}
whenever $M$ is invertible. By \cref{lem:a2},
\cref{eq:2.22} holds for every symmetric $M$. Finally, \cref{lem:a1} allows us to remove the regularity assumptions throughout all of \cref{sec:2} and completes the proof.
\end{proof}

\appendix

\section{Removing regularity assumptions}

We collect here the approximation arguments used in the proof.

\begin{lemma}\label{lem:a1}
Let $X \sim \mu$ be an isotropic log-concave random vector in $\R^n$. To prove \cref{thm:1.2}, it is enough to consider measures of the form
\[
  \dd\mu(x)
  =\rho(x)\,\dd x
  =e^{-V(x)}\mathbf{1}_{K}(x)\,\dd x,
\]
where $K\subset\R^n$ is bounded, open, and convex, and $V$ is smooth and
convex on an open neighborhood of $\overline K$. There are isotropic
log-concave measures $\mu_\varepsilon$ of this form such that
\[
  \E_{X\sim\mu_\varepsilon}p(X)
  \longrightarrow
  \E_{X\sim\mu}p(X)
\]
for every polynomial $p$ of degree at most four as
$\varepsilon\downarrow0$.
\end{lemma}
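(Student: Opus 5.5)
The plan is a standard approximation in two stages, truncation followed by smoothing. Each stage preserves log-concavity. Afterwards I restore isotropy by an affine renormalization and check that all moments up to order four converge. I will then explain why this suffices for \cref{thm:1.2}: both sides of \cref{eq:1.2} are polynomials of degree at most four in $X$, with coefficients depending on $M$. The quantity $\Var(\inner{MX}{X})$ involves moments of order up to four, and $\E\abs{2MX}^2$ involves moments of order two. So if \cref{eq:1.2} holds for every $\mu_\varepsilon$ and the moments converge, it passes to the limit for $\mu$.

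\textbf{Construction.} First reduce to the case where $\mu$ is full-dimensional. If $\mu$ is isotropic, its support cannot lie in a proper affine subspace, since the covariance is $\Id$. Hence $\mu$ has a log-concave density $\rho=e^{-W}$ on $\R^n$.

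Truncation: let $\mu_R$ be $\mu$ conditioned on the ball $RB_2^n$. This is log-concave, and since log-concave measures have exponential tails, all moments of $\mu_R$ converge to those of $\mu$ as $R\to\infty$.

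Smoothing: convolve $\mu_R$ with a small Gaussian $\gamma_\delta$. By Prékopa--Leindler the result is log-concave with a smooth density $e^{-V_\delta}$, where $V_\delta$ is smooth and convex on all of $\R^n$. Then restrict it to an open bounded convex set $K$, for instance the ball $(R+1)B_2^n$, and normalize. This gives a density of the required form $e^{-V}\mathbf{1}_K$, with $V$ smooth and convex on a neighborhood of $\overline K$. The Gaussian tails outside $K$ are negligible, so the moments up to order four converge as $\delta\to0$ and then $R\to\infty$.

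\textbf{Isotropy.} Finally apply the affine map $x\mapsto A_\varepsilon^{-1}(x-b_\varepsilon)$, where $b_\varepsilon$ and $A_\varepsilon^2$ are the barycenter and covariance of the approximant. Since $b_\varepsilon\to0$ and $A_\varepsilon\to\Id$, the image measures are isotropic. They keep the required form, because the image of $K$ is still bounded, open and convex, and the composed potential is still smooth and convex. Their moments of order at most four still converge to those of $\mu$.

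\textbf{Main obstacle.} The bookkeeping of moment convergence under the double limit is where care is needed. I would handle it with uniform exponential tail bounds for log-concave measures whose covariance is bounded. Under these bounds, degree-four polynomials are uniformly integrable, and weak convergence then upgrades to convergence of their expectations.
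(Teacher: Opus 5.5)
Your proposal is correct and is essentially the paper's argument: smooth by Gaussian convolution (which gives a positive, smooth, log-concave density on all of $\R^n$), restrict to a large ball, renormalize affinely, and pass moments of degree at most four to the limit via tail control. The differences are cosmetic: you truncate first and take a double limit in $(\delta,R)$ using uniform log-concave tail bounds, whereas the paper uses a single parameter, conditioning $X+\sqrt{\varepsilon}Z$ on $\{\abs{X+\sqrt{\varepsilon}Z}<\varepsilon^{-1/2}\}$ and bounding the discarded fourth-moment contribution by $\varepsilon^{2}\E\abs{X+\sqrt{\varepsilon}Z}^8$.
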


\begin{proof}
Let $\widetilde X_\varepsilon=X+\sqrt{\varepsilon}Z$ for an independent
standard Gaussian $Z$, and condition its law on
$\{\abs{\widetilde X_\varepsilon}<\varepsilon^{-1/2}\}$. The convolved
density is positive, smooth, and log-concave, so restricting it to this
ball and then applying an affine normalization produces an isotropic
measure $\mu_\varepsilon$ of the required form. Since $X$ has finite
eighth moment,
\[
  \E\!\left[\abs{\widetilde X_\varepsilon}^4
    \mathbf{1}_{\{\abs{\widetilde X_\varepsilon}\geq\varepsilon^{-1/2}\}}
  \right]
  \leq\varepsilon^2\E\abs{\widetilde X_\varepsilon}^8\longrightarrow0.
\]
Together with $\widetilde X_\varepsilon\to X$ in $L^4$, this shows that
the conditioned laws converge to $\mu$ in all moments through degree
four. Their means and covariances therefore converge to $0$ and $\Id$,
so the affine normalization preserves this moment convergence. Both
sides of \cref{eq:1.2} now pass to the limit.
\end{proof}

\begin{lemma}\label{lem:a2}
It is enough to prove \cref{eq:2.22} for invertible
symmetric matrices $M$.
\end{lemma}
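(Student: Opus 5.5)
Fix the (regularized) isotropic log-concave measure $\mu$ and a symmetric matrix $M$, possibly singular. I would reduce to the invertible case by a perturbation argument: set $M_\varepsilon = M + \varepsilon \Id$ for $\varepsilon>0$. Since $M$ has finitely many eigenvalues, $M_\varepsilon$ is invertible for all sufficiently small $\varepsilon>0$ (every $\varepsilon$ not equal to minus an eigenvalue of $M$, in particular all $0<\varepsilon<\min\{\abs{\lambda}:\lambda\neq 0 \text{ eigenvalue}\}$). Applying \cref{eq:2.22} to each such $M_\varepsilon$ gives
\[
  \Var_{X\sim\mu}\bigl(\inner{M_\varepsilon X}{X}\bigr)\le 8\Tr(M_\varepsilon^2).
\]

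The remaining task is to let $\varepsilon\downarrow0$ on both sides. The right-hand side is a polynomial in $\varepsilon$: $\Tr(M_\varepsilon^2)=\Tr(M^2)+2\varepsilon\Tr(M)+n\varepsilon^2\to\Tr(M^2)$. For the left-hand side, I would write $\inner{M_\varepsilon X}{X}=\inner{MX}{X}+\varepsilon\abs{X}^2$. Its variance then expands as
\[
  \Var\inner{MX}{X}+2\varepsilon\Cov\bigl(\inner{MX}{X},\abs{X}^2\bigr)+\varepsilon^2\Var\abs{X}^2 .
\]
All of these quantities are finite, because they involve moments of $X$ of order at most four, and log-concave measures (in particular, the bounded-support regularized ones) have finite moments of all orders. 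Hence the left-hand side converges to $\Var\inner{MX}{X}$. Passing to the limit gives $\Var\inner{MX}{X}\le 8\Tr(M^2)$. Finally, $\E\abs{\nabla\inner{MX}{X}}^2=4\E\abs{MX}^2=4\Tr(M^2)$ by isotropy, which is the equality on the right of \cref{eq:2.22}.

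There is no real obstacle. The only point to check is that the invertible case is available at the level of the regularized measures used in \cref{sec:2}, so that no interchange of limits with \cref{lem:a1} is needed. The order is to fix the regularized $\mu$, perturb $M$, and only afterwards apply \cref{lem:a1}; both approximations only require convergence of polynomial moments of degree at most four.
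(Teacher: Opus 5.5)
Your argument is correct and is essentially the paper's: both perturb $M$ by a small positive semidefinite matrix to make it invertible, then pass to the limit in \cref{eq:2.22} using finiteness of fourth moments. The only cosmetic difference is in the choice of perturbation. The paper uses $M_\varepsilon=M+\varepsilon P_0$, with $P_0$ the orthogonal projection onto $\ker M$, which is invertible for every $\varepsilon>0$ and gives $\Tr(M_\varepsilon^2)=\Tr(M^2)+\varepsilon^2\dim(\ker M)$; your $M+\varepsilon\Id$ needs $\varepsilon$ small and picks up the harmless linear term $2\varepsilon\Tr(M)$.
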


\begin{proof}
Let $P_0$ be the orthogonal projection onto $\ker M$ and set
$M_\varepsilon=M+\varepsilon P_0$. Then $M_\varepsilon$ is invertible and
\[
  \abs{\inner{(M_\varepsilon-M)X}{X}}\leq\varepsilon\abs{X}^2,
  \qquad
  \Tr(M_\varepsilon^2)=\Tr(M^2)+\varepsilon^2\dim(\ker M).
\]
Fourth-moment integrability therefore lets
\cref{eq:2.22} pass to $M$ as $\varepsilon\downarrow0$,
since
\[
  \left(
    \E_{X\sim\mu}\abs{\inner{(M_\varepsilon-M)X}{X}}^2
  \right)^{1/2}
  \leq\varepsilon
  \left(\E_{X\sim\mu}\abs{X}^4\right)^{1/2}.\qedhere
\]
\end{proof}

\begin{lemma}\label{lem:a3}
Under the regularity assumptions of \cref{lem:a1}, fix $1\leq i\leq n$.
Suppose that $f,g:\R^n\to\R$ are smooth and that
$(\partial_i f)g$, $f\partial_i g$, $fg\partial_i\varphi$, and $fg$
belong to $L^1(\nu)$. Then
\[
  \E_{Y\sim\nu}\!\left[(\partial_i f)(Y)g(Y)\right]
  =
  \E_{Y\sim\nu}\!\left[
    f(Y)\bigl(\partial_i\varphi(Y)g(Y)-\partial_i g(Y)\bigr)
  \right].
\]
\end{lemma}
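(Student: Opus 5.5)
The plan is the standard cutoff (bump function) argument for integration by parts against the density $e^{-\varphi}$. Fix a smooth $\chi:\R^n\to[0,1]$ with $\chi\equiv1$ on the unit ball and $\chi\equiv0$ outside the ball of radius $2$. Set $\chi_R(y)=\chi(y/R)$, so that $\abs{\nabla\chi_R}\le C/R$ and $\chi_R\to1$ pointwise. Since $\varphi$ is finite and smooth on $\R^n$ by \cref{lem:2.2}, the function $\chi_R f g e^{-\varphi}$ is smooth and compactly supported. The fundamental theorem of calculus in the $i$th coordinate then gives
\[
  \int_{\R^n}\partial_i\bigl(\chi_R f g e^{-\varphi}\bigr)\dd y=0 .
\]

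Expanding the derivative by the product rule gives
\[
  \E_{Y\sim\nu}\bigl[\chi_R\,(\partial_i f)g\bigr]
  =\E_{Y\sim\nu}\bigl[\chi_R\,f\bigl(g\,\partial_i\varphi-\partial_i g\bigr)\bigr]
  -\E_{Y\sim\nu}\bigl[(\partial_i\chi_R)\,fg\bigr].
\]
We then let $R\to\infty$ term by term.

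1. In the first three integrals, $\abs{\chi_R}\le1$ and each integrand $(\partial_i f)g$, $fg\,\partial_i\varphi$, $f\partial_i g$ lies in $L^1(\nu)$ by hypothesis. Dominated convergence therefore gives convergence to the corresponding integrals without $\chi_R$.
2. The boundary term is bounded by
\[
  \frac{C}{R}\,\E_{Y\sim\nu}\abs{fg}\longrightarrow0,
\]
because $fg\in L^1(\nu)$. It can also be handled by dominated convergence, since $\partial_i\chi_R\to0$ pointwise with a uniform bound.

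Combining these limits yields the stated identity.

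There is no real obstacle here; the only points needing care are the following.
\begin{itemize}
\item The identity must be read with absolutely convergent integrals. That is exactly why all four integrability hypotheses are imposed: $fg$ is needed only for the cutoff term, and the other three are needed for the main terms.
\item We need $\varphi$ to be finite everywhere, so that $e^{-\varphi}$ is smooth with no boundary. This is supplied by \cref{lem:2.2} under the regularity assumptions of \cref{lem:a1}.
\end{itemize}
No growth information on $\varphi$ is needed, since the hypotheses are phrased directly in $L^1(\nu)$.
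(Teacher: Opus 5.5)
Your proposal is correct and follows the paper's proof essentially step for step. It uses the same rescaled cutoff $\chi_R(y)=\chi(y/R)$ and the same integration by parts of $\partial_i(\chi_R fg e^{-\varphi})$. It then applies dominated convergence to the three main terms and bounds the cutoff term by $O(R^{-1})\,\E_{Y\sim\nu}\abs{fg}$, exactly as the paper does.
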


\begin{proof}
Choose $\chi\in C_c^\infty(\R^n)$ such that
$0\leq\chi\leq1$, $\chi=1$ on $B(0,1)$, and
$\operatorname{supp}\chi\subseteq B(0,2)$, and set $\chi_R(y)=\chi(y/R)$. Then
\[
  \partial_i\chi_R(y)
  =\frac1R(\partial_i\chi)(y/R),
  \qquad
  \norm{\partial_i\chi_R}_\infty
  \leq\frac{\norm{\partial_i\chi}_\infty}{R},
\]
and $\operatorname{supp}(\partial_i\chi_R)\subseteq
\{y:R\leq\abs{y}\leq2R\}$. Since
$\chi_Rfg e^{-\varphi}$ is continuously differentiable and compactly
supported, ordinary coordinate integration by parts gives
\[
\begin{aligned}
  0
  &=\int_{\R^n}
    \partial_i\!\left(\chi_R(y)f(y)g(y)e^{-\varphi(y)}\right)\,\dd y\\
  &=\E_{Y\sim\nu}\!\left[
    (\partial_i\chi_R)(Y)f(Y)g(Y)
    +\chi_R(Y)(\partial_i f)(Y)g(Y)
  \right.\\
  &\hspace{42mm}\left.
    +\chi_R(Y)f(Y)(\partial_i g)(Y)
    -\chi_R(Y)f(Y)g(Y)\partial_i\varphi(Y)
  \right].
\end{aligned}
\]
Rearranging,
\[
\begin{aligned}
  \E_{Y\sim\nu}\!\left[
    \chi_R(Y)(\partial_i f)(Y)g(Y)
  \right]
  &=\E_{Y\sim\nu}\!\left[
    \chi_R(Y)f(Y)
    \bigl(\partial_i\varphi(Y)g(Y)-\partial_i g(Y)\bigr)
  \right]\\
  &\quad-\E_{Y\sim\nu}\!\left[
    (\partial_i\chi_R)(Y)f(Y)g(Y)
  \right].
\end{aligned}
\]
The last term satisfies
\[
  \left|
    \E_{Y\sim\nu}\!\left[
      (\partial_i\chi_R)(Y)f(Y)g(Y)
    \right]
  \right|
  \leq
  \frac{\norm{\partial_i\chi}_\infty}{R}
  \E_{Y\sim\nu}\abs{f(Y)g(Y)}
  \longrightarrow0.
\]
For every fixed $y$, one has $\chi_R(y)=1$ for all sufficiently large
$R$, and $0\leq\chi_R\leq1$. The other terms therefore converge by
dominated convergence, using the assumed integrability of
$(\partial_i f)g$, $f\partial_i g$, and
$fg\partial_i\varphi$. Letting $R\to\infty$ gives the claimed identity.
\end{proof}

\begin{lemma}\label{lem:a4}
There are functions $\chi_R\in C_c^\infty(\R^n)$, defined for all
sufficiently large $R$, such that
\[
  0\leq\chi_R\leq1,
  \qquad
  \chi_R\uparrow1,
  \qquad
  \norm{\mc{L}\chi_R}_{L^1(\nu)}=O(R^{-1}).
\]
\end{lemma}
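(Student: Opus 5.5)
The plan is to build $\chi_R$ from the moment map itself rather than from $\abs{y}$. The trouble with radial cutoffs $\chi(y/R)$ is the term $\varphi^{ij}\partial_{ij}\chi_R$, which involves $(\nabla^2\varphi)^{-1}$, and we have no integrability for that. Set $\psi=\varphi-\min\varphi\ge0$; the minimum is attained because $\varphi$ is convex with $0<\int e^{-\varphi}<\infty$, hence coercive. Fix $h\in C^\infty(\R)$, nonincreasing, with $h=1$ on $(-\infty,1]$ and $h=0$ on $[2,\infty)$, and define
\[
  \chi_R=h(\psi/R).
\]
The basic properties are then immediate:
\begin{itemize}
\item $\chi_R$ is smooth, since $\varphi$ is smooth by \cref{lem:2.2}.
\item $\chi_R$ is compactly supported, since $\{\psi\le 2R\}$ is compact by coercivity.
\item $0\le\chi_R\le1$.
\item $\chi_R\uparrow1$ pointwise, because $\psi/R$ decreases in $R$ (as $\psi\ge0$) and $h$ is nonincreasing.
\end{itemize}

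\textbf{Computing $\mc{L}\chi_R$.} By the chain rule,
\[
  \mc{L}\chi_R=\frac{h'(\psi/R)}{R}\,\mc{L}\varphi+\frac{h''(\psi/R)}{R^2}\,\Gamma(\varphi),
  \qquad
  \Gamma(\varphi)=\varphi^{ij}\partial_i\varphi\,\partial_j\varphi .
\]
The key point is that $\varphi^{ij}\partial_{ij}\varphi=n$. Hence
\[
  \mc{L}\varphi=n-(V_i\circ\nabla\varphi)\,\partial_i\varphi .
\]
This is bounded: $\nabla\varphi$ takes values in the bounded set $K$, and $\nabla V$ is bounded on $\overline K$ because $V$ is smooth on a neighborhood of $\overline K$. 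So the first term has $L^1(\nu)$ norm at most $\norm{h'}_\infty\norm{\mc{L}\varphi}_\infty/R=O(R^{-1})$. The second term is bounded by $\norm{h''}_\infty R^{-2}\,\E_{Y\sim\nu}\Gamma(\varphi)(Y)$. It therefore suffices to prove that
\[
  \E_{Y\sim\nu}\Gamma(\varphi)(Y)<\infty,
\]
which in fact gives $O(R^{-2})$ for this term.

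\textbf{Integrability of $\Gamma(\varphi)$ (the main obstacle).} I would derive it from \cref{eq:2.2} using the same cutoffs. Let $H$ be a primitive of $h$ with $H(0)=0$, so that $H$ is nondecreasing, and apply \cref{eq:2.2} with $f=R\,H(\psi/R)$ (not compactly supported, but grows at most linearly in $\psi$) and $g=\chi_R\in C_c^\infty$. Since $\nabla f=h(\psi/R)\nabla\varphi$, this gives
\[
  \E_{Y\sim\nu}\bigl[h(\psi/R)^2\,\Gamma(\varphi)\bigr]
  \;\text{ equal, up to sign, to }\;
  \E_{Y\sim\nu}\bigl[R\,H(\psi/R)\,\mc{L}\chi_R\bigr].
\]
Expanding $\mc{L}\chi_R$ as above, the right side is bounded by
\[
  C\,\E_{Y\sim\nu}\bigl[\psi\,\mathbf 1_{\{\psi\le2R\}}\bigr]
  \;+\;
  C R^{-1}\,\E_{Y\sim\nu}\bigl[\psi\,\mathbf 1_{\{R\le\psi\le2R\}}\,\Gamma(\varphi)\bigr],
\]
using $0\le RH(\psi/R)\le\psi$. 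The first piece is uniformly bounded, since $\nabla\varphi$ is bounded, so $\psi$ has linear growth and is integrable against $e^{-\varphi}$. The second piece is at most $2C\,\E[\mathbf 1_{\{\psi\le 2R\}}\Gamma(\varphi)]$, which is a constant multiple of a quantity of the same type as the left side.

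Absorbing this term is the delicate step. I would first try choosing $h$ so that $\norm{h''}_\infty$ is small relative to $h^2$ on the transition region, or using a two-scale comparison between $\E[\mathbf 1_{\{\psi\le R\}}\Gamma]$ and $\E[\mathbf 1_{\{\psi\le2R\}}\Gamma]$. If that fails, I would bypass it by using the explicit bound
\[
  \Gamma(\varphi)=\inner{(\nabla^2\varphi)^{-1}\nabla\varphi}{\nabla\varphi}
\]
together with the first-derivative identity \cref{eq:2.4}, which gives
\[
  (\nabla^2\varphi)^{-1}\nabla\varphi=\nabla V(\nabla\varphi)-(\nabla^2\varphi)^{-1}\bigl(\varphi^{ab}\partial_{ab\,\cdot}\varphi\bigr),
\]
and then integrating the last term by parts via \cref{lem:a3}. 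Once $\E_{Y\sim\nu}\Gamma(\varphi)<\infty$ is in hand, monotone convergence in $R$ justifies all the limits, and the estimate $\norm{\mc{L}\chi_R}_{L^1(\nu)}=O(R^{-1})$ follows.
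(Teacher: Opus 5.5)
Your setup matches the paper's: the cutoff $h(\psi/R)$, the chain-rule expansion of $\mc{L}\chi_R$, and the boundedness of $\mc{L}\varphi=n-\inner{\nabla V(\nabla\varphi)}{\nabla\varphi}$. The gap is that you reduce to $\E_{Y\sim\nu}\Gamma(\varphi)(Y)<\infty$, which is false, so no absorption or integration-by-parts trick can close your ``main obstacle''.

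Under the regularity assumptions of \cref{lem:a1}, $\Gamma(\varphi)\notin L^1(\nu)$ always. Suppose it were integrable. Then \cref{eq:2.2} with $f=\varphi$ and the compactly supported $g=\chi_R$ would give $\E_{Y\sim\nu}[\chi_R\mc{L}\varphi]=-R^{-1}\E_{Y\sim\nu}[h'(\psi/R)\Gamma(\varphi)]\to0$, hence $\E_{Y\sim\nu}\mc{L}\varphi(Y)=0$. But $\E_{Y\sim\nu}\mc{L}\varphi(Y)=n-\E_{X\sim\mu}\inner{\nabla V(X)}{X}$. By the divergence theorem this equals $\int_{\partial K}e^{-V(x)}\inner{x}{N(x)}\dd\mathcal{H}^{n-1}(x)$, where $N$ is the outer unit normal. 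This is strictly positive, because the origin is interior to $K$ and $e^{-V}>0$ on $\overline K$.

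Concretely, take $\mu$ uniform on $(-a,a)$ with $a=\sqrt3$. Then $\varphi(y)=2\log\cosh(ay/2)+\log(4/a)$ and $\Gamma(\varphi)=2\sinh^2(ay/2)$, so $\Gamma(\varphi)\dd\nu=\tfrac a2\tanh^2(ay/2)\dd y$, which has infinite mass. In this example $\E_{Y\sim\nu}[\abs{h''(\psi/R)}\Gamma(\varphi)]$ grows linearly in $R$. So the second term of $\mc{L}\chi_R$ really is of order $R^{-1}$ in $L^1(\nu)$, not $R^{-2}$ as you claim.

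Separately, your identity with $f=RH(\psi/R)$ and $g=\chi_R$ is miscomputed. The carr\'e du champ term is $R^{-1}h(\psi/R)h'(\psi/R)\Gamma(\varphi)$, not $h(\psi/R)^2\Gamma(\varphi)$.

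The missing idea is that you only need the shell estimate $\E_{Y\sim\nu}[\abs{h''(\psi/R)}\Gamma(\varphi)]=O(R)$. This follows from \cref{eq:2.2} directly, with no absorption, because $\Gamma(\varphi)\geq0$. It is the paper's argument. Take a smooth compactly supported $w\geq\abs{h''}$ and set $q_R(t)=R^{-1}\int_{t/R}^\infty w(s)\dd s$. Then $q_R(\psi)$ is compactly supported, $-q_R'(t)=R^{-2}w(t/R)$, and $\norm{q_R}_\infty\leq R^{-1}\norm{w}_{L^1}$. Applying \cref{eq:2.2} with $f=\varphi$ and $g=q_R(\psi)$ gives
\[
  \frac{1}{R^2}\E_{Y\sim\nu}\!\left[w\bigl(\psi(Y)/R\bigr)\Gamma(\varphi)(Y)\right]
  =\E_{Y\sim\nu}\!\left[q_R\bigl(\psi(Y)\bigr)\mc{L}\varphi(Y)\right]
  \leq\frac{\norm{w}_{L^1}\norm{\mc{L}\varphi}_\infty}{R}.
\]
Combined with your bound on the first term, this gives $\norm{\mc{L}\chi_R}_{L^1(\nu)}=O(R^{-1})$.
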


\begin{proof}
The map $\varphi$ is finite and convex, and
$\int_{\R^n}e^{-\varphi(y)}\,\dd y=1$, so it tends to infinity at infinity
and attains its minimum. Set
\[
  W=\varphi-\min_{\R^n}\varphi+1.
\]
Then
\[
  \mc{L}W
  =n-\inner{\nabla V(\nabla\varphi)}{\nabla\varphi}
\]
is bounded because $\nabla\varphi(\R^n)=K$ and both $K$ and $\nabla V$
are bounded. Write $D=\norm{\mc{L}W}_\infty$. Choose a smooth
nonincreasing function $\eta:\R\to[0,1]$ such that $\eta=1$ on
$(-\infty,1]$ and $\eta=0$ on $[2,\infty)$, and put
\[
  \chi_R=\eta(W/R).
\]
These functions are compactly supported and converge pointwise to $1$.
The chain rule gives
\[
  \mc{L}\chi_R
  =\frac1R\eta'(W/R)\mc{L}W
  +\frac1{R^2}\eta''(W/R)
  \varphi^{ij}(\partial_iW)(\partial_jW).
\]
Choose a smooth compactly supported function $w\geq\abs{\eta''}$ and
define
\[
  q_R(t)=\frac1R\int_{t/R}^{\infty}w(s)\,\dd s.
\]
Then $-q_R'(t)=R^{-2}w(t/R)$,
$\norm{q_R}_\infty\leq R^{-1}\norm{w}_{L^1}$, and $q_R(W)$ is compactly
supported. Hence \cref{eq:2.2} gives
\[
\begin{aligned}
  0
  &\leq\frac1{R^2}\E_{Y\sim\nu}\!\left[
    w(W(Y)/R)\varphi^{ij}(Y)
    (\partial_iW)(Y)(\partial_jW)(Y)
  \right]\\
  &=-\E_{Y\sim\nu}\!\left[
    \varphi^{ij}(Y)(\partial_iW)(Y)
    \partial_j(q_R\circ W)(Y)
  \right]\\
  &=\E_{Y\sim\nu}\!\left[q_R(W(Y))(\mc{L}W)(Y)\right]\\
  &\leq\frac{D\norm{w}_{L^1}}{R}.
\end{aligned}
\]
Consequently,
\[
  \norm{\mc{L}\chi_R}_{L^1(\nu)}
  \leq\frac{D}{R}
  \left(\norm{\eta'}_\infty+\norm{w}_{L^1}\right).\qedhere
\]
\end{proof}

\begin{lemma}\label{lem:a5}
Suppose that $f\in C^\infty(\R^n)\cap L^\infty(\nu)$ and that, for some
$c>0$,
\[
  \mc{L}f+cf\geq0.
\]
Then $\mc{L}f\in L^1(\nu)$ and
\[
  \E_{Y\sim\nu}\mc{L}f(Y)=0.
\]
\end{lemma}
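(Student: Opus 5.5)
Fix the cutoffs $\chi_R=\eta(W/R)$ of \cref{lem:a4}, which are compactly supported, satisfy $0\le\chi_R\le1$, increase to $1$, and have $\norm{\mc{L}\chi_R}_{L^1(\nu)}=O(R^{-1})$. Set $g=\mc{L}f+cf\ge0$. The plan is to test $g$ against $\chi_R$ and let $R\to\infty$, using the symmetry \cref{eq:2.2} to move $\mc{L}$ off $f$ and onto the compactly supported cutoff.

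Since $\chi_R$ is compactly supported, \cref{eq:2.2} applied twice (once with $\chi_R$ in each slot, i.e.\ $\E[\chi_R\mc{L}f]=-\E[\varphi^{ij}\partial_i\chi_R\partial_jf]=\E[f\mc{L}\chi_R]$) gives
\[
  \E_{Y\sim\nu}\bigl[\chi_R(Y)\,\mc{L}f(Y)\bigr]=\E_{Y\sim\nu}\bigl[f(Y)\,\mc{L}\chi_R(Y)\bigr].
\]
Note that all integrands are compactly supported and smooth, so this is legitimate. Hence
\[
  \E_{Y\sim\nu}\bigl[\chi_R g\bigr]=\E_{Y\sim\nu}\bigl[f\,\mc{L}\chi_R\bigr]+c\,\E_{Y\sim\nu}\bigl[\chi_R f\bigr],
\]
and the right side is bounded by $\norm{f}_\infty\norm{\mc{L}\chi_R}_{L^1(\nu)}+c\norm{f}_\infty\le C$ uniformly in $R$.

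Since $g\ge0$ and $\chi_R\uparrow1$, monotone convergence yields $g\in L^1(\nu)$ with
\[
  \E g=\lim_{R\to\infty}\E[\chi_Rg]=0+c\,\E f,
\]
using $\norm{\mc{L}\chi_R}_{L^1}\to0$ and dominated convergence for $\chi_Rf$ (as $f$ is bounded and $\nu$ is a probability measure). Then $\mc{L}f=g-cf\in L^1(\nu)$ as a difference of integrable functions, and $\E\mc{L}f=\E g-c\E f=0$.

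The main obstacle is simply the one already handled in \cref{lem:a4}, namely producing cutoffs with $\mc{L}\chi_R\to0$ in $L^1(\nu)$. Given that lemma, the only care needed here is to use the one-sided bound $g\ge0$, since $\mc{L}f$ itself is not a priori integrable, and monotone convergence is what converts the uniform bound into integrability.
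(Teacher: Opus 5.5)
Your proof is correct and follows the paper's argument essentially step for step: you test the nonnegative function $\mc{L}f+cf$ against the cutoffs $\chi_R$ of \cref{lem:a4}, move $\mc{L}$ onto $\chi_R$ via the symmetry \cref{eq:2.2}, and pass to the limit using $\norm{\mc{L}\chi_R}_{L^1(\nu)}\to0$ and the boundedness of $f$. The only difference is that you use monotone convergence (valid because $\chi_R\uparrow1$) where the paper uses Fatou's lemma, and this does not change the argument.
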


\begin{proof}
Let $\chi_R$ be given by \cref{lem:a4}. Since $\chi_R$ is
compactly supported, symmetry in \cref{eq:2.2} gives
\[
\begin{aligned}
  \E_{Y\sim\nu}\!\left[
    \chi_R(Y)\bigl((\mc{L}f)(Y)+cf(Y)\bigr)
  \right]
  &=c\E_{Y\sim\nu}\!\left[\chi_R(Y)f(Y)\right]
    +\E_{Y\sim\nu}\!\left[f(Y)(\mc{L}\chi_R)(Y)\right].
\end{aligned}
\]
The left-hand side has a nonnegative integrand, while the right-hand
side is bounded uniformly in $R$ because $f$ is bounded and
$\norm{\mc{L}\chi_R}_{L^1(\nu)}\to0$. Fatou's lemma therefore shows that
$\mc{L}f+cf\in L^1(\nu)$, and hence $\mc{L}f\in L^1(\nu)$. Letting
$R\to\infty$ in the displayed identity gives
\[
  \E_{Y\sim\nu}\!\left[(\mc{L}f)(Y)+cf(Y)\right]
  =c\E_{Y\sim\nu}f(Y),
\]
which is the desired conclusion.
\end{proof}

\begin{lemma}\label{lem:a6}
Under the regularity assumptions of \cref{lem:a1},
every bounded smooth function $f:\R^n\to\R$ satisfies
\[
  \Var_{Y\sim\nu}(f(Y))
  \leq
  \E_{Y\sim\nu}\!\left[
    \varphi^{ab}(Y)(\partial_af)(Y)(\partial_bf)(Y)
  \right],
\]
where the right-hand side is allowed to be infinite.
\end{lemma}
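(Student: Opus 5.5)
The plan is to recognize the claim as the Brascamp--Lieb inequality \cite[Theorem~4.1]{BL76} for the measure $\dd\nu=e^{-\varphi(y)}\dd y$. By \cref{lem:2.2}, $\varphi$ is finite and smooth on all of $\R^n$ and $\nabla^2\varphi$ is positive definite everywhere, so $\nu$ has a smooth, strictly positive, strictly log-concave density. The only work is to check that the classical statement applies with the stated integrability, namely bounded smooth $f$ and a right-hand side that may be infinite. If $\E_{Y\sim\nu}[\varphi^{ab}\partial_af\partial_bf]=\infty$ there is nothing to prove, so I assume it is finite. Since $f$ is bounded, $f\in L^2(\nu)$. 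I also replace $f$ by $f-\E_{Y\sim\nu}f(Y)$, which changes neither side.

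To keep the argument self-contained I would run the standard $L^2$ (H\"ormander/Bochner) proof. Let $\mc{L}_0 g=\Delta g-\inner{\nabla\varphi}{\nabla g}$, the generator symmetric in $L^2(\nu)$. For $g\in C_c^\infty(\R^n)$, ordinary integration by parts (compact support, so no boundary terms) gives
\[
\E_{Y\sim\nu}[f\,\mc{L}_0 g]=-\E_{Y\sim\nu}\inner{\nabla f}{\nabla g}.
\]
It also gives the integrated Bochner identity
\[
\E_{Y\sim\nu}(\mc{L}_0g)^2=\E_{Y\sim\nu}\bigl[\norm{\nabla^2 g}_{\HS}^2+\inner{\nabla^2\varphi\nabla g}{\nabla g}\bigr]\ge \E_{Y\sim\nu}\inner{\nabla^2\varphi\nabla g}{\nabla g}.
\]
Apply Cauchy--Schwarz with the weights $(\nabla^2\varphi)^{\mp1/2}$:
\[
\abs{\E_{Y\sim\nu}[f\,\mc{L}_0g]}\le \Bigl(\E_{Y\sim\nu}\,\varphi^{ab}\partial_af\partial_bf\Bigr)^{1/2}\norm{\mc{L}_0 g}_{L^2(\nu)}.
\]
Consequently, if $\{\mc{L}_0 g: g\in C_c^\infty\}$ is dense in the mean-zero subspace of $L^2(\nu)$, then taking the supremum over $g$ with $\norm{\mc{L}_0g}_{L^2(\nu)}\le1$ yields $\norm{f}_{L^2(\nu)}^2\le\E_{Y\sim\nu}[\varphi^{ab}\partial_af\partial_bf]$, which is the claim.

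The main obstacle is this density statement. I would obtain it from essential self-adjointness of $\mc{L}_0$ on $C_c^\infty(\R^n)$, which holds for a diffusion with smooth potential on the complete space $\R^n$ (e.g.\ \cite{BGL14}). If $h\in L^2(\nu)$ is mean zero and orthogonal to $\mc{L}_0C_c^\infty$, then $h$ lies in the kernel of the self-adjoint closure. Hence $\E|\nabla h|^2=0$, so $h$ is constant, and the constant is zero because $h$ has mean zero. If essential self-adjointness feels too heavy, a fallback is to approximate $\nu$ by restricting to large balls with reflecting boundary, where the classical Brascamp--Lieb proof applies via the Neumann problem on a convex domain, and then let the radius tend to infinity. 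Boundedness of $f$ together with monotone convergence of the right-hand side would pass the inequality to the limit. Alternatively one can simply invoke \cite[Theorem~4.1]{BL76} directly, noting that its hypotheses (a $C^2$ strictly convex potential and $f\in L^2$ with locally integrable gradient) are met.
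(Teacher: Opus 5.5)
Your argument is correct, but your main route differs from the paper's. The paper's proof is essentially your fallback, with the bounded convex sublevel sets $\Omega_R=\{\varphi<R\}$ in place of balls. It applies the classical Brascamp--Lieb inequality to the conditioned measures $\nu(\,\cdot\mid\Omega_R)$ and lets $R\to\infty$. Boundedness of $f$ handles the variance, and monotone convergence together with $\nu(\Omega_R)\to1$ handles the right-hand side. So the paper uses Brascamp--Lieb on bounded convex domains as a black box and needs only a short limiting step.

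Your primary argument instead reproves Brascamp--Lieb on all of $\R^n$ by H\"ormander's $L^2$ duality. For $g\in C_c^\infty$, the integration by parts, the integrated Bochner identity, and the weighted Cauchy--Schwarz step are all fine. Everything then rests on $\mc{L}_0C_c^\infty(\R^n)$ being dense in the mean-zero part of $L^2(\nu)$. Your reduction of this to showing that $\ker\mc{L}_0^*$ consists of constants is sound, and the claim is true.

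If you prefer not to cite essential self-adjointness, you can prove the Liouville statement directly:
\begin{itemize}
\item An $h\in L^2(\nu)$ orthogonal to $\mc{L}_0C_c^\infty$ is a weak solution of $\operatorname{div}(e^{-\varphi}\nabla h)=0$.
\item By elliptic regularity, $h$ is smooth.
\item Testing against $\chi_R^2h$ gives $\E_{Y\sim\nu}[\chi_R^2\abs{\nabla h}^2]\le4\,\E_{Y\sim\nu}[h^2\abs{\nabla\chi_R}^2]=O(R^{-2})$, so $h$ is constant.
\end{itemize}

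The trade-off is this. Your route is self-contained and avoids truncation and boundary issues, at the cost of this functional-analytic input. The paper's route is shorter but leans entirely on the classical theorem for convex domains.
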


\begin{proof}
The convex sets
\[
  \Omega_R=\{y\in\R^n:\varphi(y)<R\}
\]
are bounded and exhaust $\R^n$. Let
$\nu_R=\nu(\,\cdot\mid\Omega_R)$. The Brascamp--Lieb inequality
\cite[Theorem~4.1]{BL76} on $\Omega_R$ gives
\[
  \Var_{Y\sim\nu_R}(f(Y))
  \leq
  \E_{Y\sim\nu_R}\!\left[
    \varphi^{ab}(Y)(\partial_af)(Y)(\partial_bf)(Y)
  \right].
\]
Letting $R\to\infty$, boundedness handles the variance, while
$\nu(\Omega_R)\to1$ and monotone convergence handle the right-hand side.
\end{proof}

\begin{lemma}\label{lem:a7}
Let $W\sim\lambda$ be centered, full-dimensional, and log-concave with
finite second moments, and let $\tau_\lambda$ be a symmetric Stein kernel with
$\E\norm{\tau_\lambda(W)}_{\HS}^2<\infty$. For every $v\in\R^n$, the
Stein identity extends from $C_c^\infty(\R^n)$ to every locally Lipschitz
function $f:\R^n\to\R$ such that $f\in L^2(\lambda)$ and
$\nabla f\in L^2(\lambda)$:
\[
  \E\!\left[\inner{W}{v}f(W)\right]
  =\E\inner{\tau_\lambda(W)v}{\nabla f(W)}.
\]
In particular,
\[
  \left|\E\!\left[\inner{W}{v}f(W)\right]\right|
  \leq
  \left(\E\abs{\tau_\lambda(W)v}^2\right)^{1/2}
  \left(\E\abs{\nabla f(W)}^2\right)^{1/2}.
\]
\end{lemma}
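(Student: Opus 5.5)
We prove \cref{lem:a7} by truncation: first a cutoff to extend the Stein identity from $C_c^\infty(\R^n)$ to compactly supported locally Lipschitz functions, then a cutoff argument to remove compact support, passing to the limit on both sides with dominated convergence and Cauchy--Schwarz. The ``in particular'' bound is then Cauchy--Schwarz applied to the extended identity.

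\emph{Step 1 (compactly supported Lipschitz $f$).} Let $f$ be Lipschitz with compact support and mollify, $f_\varepsilon=f*\rho_\varepsilon\in C_c^\infty(\R^n)$. Then $f_\varepsilon\to f$ uniformly, with supports in a fixed compact set, and $\nabla f_\varepsilon=(\nabla f)*\rho_\varepsilon$ is uniformly bounded and converges to $\nabla f$ Lebesgue-almost everywhere (Rademacher and Lebesgue differentiation). Since $\lambda$ is full-dimensional and log-concave, it is absolutely continuous, so this convergence also holds $\lambda$-almost everywhere. On the left, $\E[\inner{W}{v}f_\varepsilon(W)]\to\E[\inner{W}{v}f(W)]$ because $W$ has finite first moment. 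On the right, $\abs{\inner{\tau_\lambda v}{\nabla f_\varepsilon}}\le C\abs{\tau_\lambda v}$, which is integrable because $\tau_\lambda\in L^2(\lambda)$, so dominated convergence applies. This gives the identity for $f$; summing \cref{eq:2.18} against $v$ and using symmetry of $\tau_\lambda$ puts it in the stated form.

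\emph{Step 2 (general $f$).} Let $f$ be locally Lipschitz with $f,\nabla f\in L^2(\lambda)$. Take $\chi_R(x)=\chi(x/R)$ with $\chi$ a smooth bump equal to $1$ on $B(0,1)$, and apply Step 1 to $f\chi_R$, which is Lipschitz with compact support. This yields
\[
\E[\inner{W}{v}f\chi_R]=\E[\chi_R\inner{\tau_\lambda v}{\nabla f}]+\E[f\inner{\tau_\lambda v}{\nabla\chi_R}].
\]
For the left side, $\inner{W}{v}f\in L^1(\lambda)$ by Cauchy--Schwarz, since $W$ has finite second moments and $f\in L^2(\lambda)$; dominated convergence then gives convergence as $R\to\infty$. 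For the first term on the right, $\inner{\tau_\lambda v}{\nabla f}\in L^1(\lambda)$ because both factors are in $L^2(\lambda)$, so dominated convergence applies again. For the last term, $\abs{\nabla\chi_R}\le C/R$, so it is bounded by $(C/R)\,\norm{f}_{L^2}\,\norm{\tau_\lambda v}_{L^2}\to0$.

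The only delicate point is the almost-everywhere convergence of the mollified gradients in Step 1. It is resolved by absolute continuity of $\lambda$ together with the uniform bound $\abs{\nabla f_\varepsilon}\le\mathrm{Lip}(f)$; everything else is routine. Log-concavity enters only through absolute continuity.
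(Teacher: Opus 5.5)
Your proof is correct, and it follows a more elementary route than the paper's. The paper's proof takes two lines. It quotes the density result \cite[Proposition~27]{BK20} to obtain $f_k\in C_c^\infty(\R^n)$ with $f_k\to f$ and $\nabla f_k\to\nabla f$ in $L^2(\lambda)$. It then passes to the limit on both sides of the Stein identity by Cauchy--Schwarz alone, since $\inner{W}{v}$ and $\tau_\lambda(W)v$ lie in $L^2(\lambda)$.

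You build the approximation yourself in two stages: first a cutoff by $\chi_R$, then mollification of the compactly supported Lipschitz function $f\chi_R$. At each stage you need only $\lambda$-a.e.\ convergence with an integrable dominating function, not convergence in the $H^1(\lambda)$ norm. Taking the limits one after the other ($\varepsilon\to0$ at fixed $R$, then $R\to\infty$) also avoids any diagonal argument.

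The paper's route buys brevity at the cost of a black box. Yours is self-contained and shows that log-concavity enters only through absolute continuity of $\lambda$, which is what lets the Lebesgue-a.e.\ conclusions of Rademacher's theorem and Lebesgue differentiation hold $\lambda$-a.e. So the lemma holds for any absolutely continuous law with finite second moments and a symmetric Stein kernel in $L^2(\lambda)$. Your dominated-convergence bounds in fact also give $f\chi_R\to f$ and $(f\chi_R)*\rho_\varepsilon\to f\chi_R$ in $H^1(\lambda)$. You have therefore essentially reproved the cited density statement for locally Lipschitz functions in this generality.
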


\begin{proof}
By \cite[Proposition~27]{BK20}, choose
$f_k\in C_c^\infty(\R^n)$ such that
\[
  \E_{W\sim\lambda}\abs{f_k(W)-f(W)}^2\longrightarrow0,
  \qquad
  \E_{W\sim\lambda}
  \abs{\nabla f_k(W)-\nabla f(W)}^2\longrightarrow0.
\]
The two sides of the Stein identity converge by Cauchy--Schwarz, since
\[
  \E_{W\sim\lambda}\abs{\inner{W}{v}}^2<\infty,
  \qquad
  \E_{W\sim\lambda}\abs{\tau_\lambda(W)v}^2<\infty.
\]
The inequality follows from one more application of Cauchy--Schwarz.
\end{proof}

\bibliographystyle{amsplain0}
\bibliography{main}

\end{document}